\definecolor{verylight}{gray}{0.97}
\definecolor{light}{gray}{0.9}
\definecolor{medium}{gray}{0.85}
\definecolor{dark}{gray}{0.6}
\def\G{{\mathcal G}}
\def\F{{\mathcal F}}
\def\I{{\mathcal I}}
\def\opn#1#2{\def#1{\operatorname{#2}}} 
\opn\chara{char} \opn\length{\ell} \opn\pd{pd} \opn\rk{rk}
\opn\projdim{proj\,dim} \opn\injdim{inj\,dim} \opn\rank{rank}
\opn\depth{depth} \opn\grade{grade} \opn\height{height}
\opn\embdim{emb\,dim} \opn\codim{codim}
\opn\Tr{Tr} \opn\bigrank{big\,rank}
\opn\superheight{superheight}\opn\lcm{lcm}
\opn\trdeg{tr\,deg}
\opn\reg{reg} \opn\lreg{lreg} \opn\ini{in} \opn\lpd{lpd}
\opn\size{size} \opn\sdepth{sdepth}
\opn\link{link}\opn\fdepth{fdepth}\opn\lex{lex}
\opn\tr{tr}
\opn\type{type}
\opn\Borel{Borel}
\opn\cdeg{cdeg}
\opn\div{div} \opn\Div{Div} \opn\cl{cl} \opn\Cl{Cl}
\opn\Spec{Spec} \opn\Supp{Supp} \opn\supp{supp} \opn\Sing{Sing}
\opn\Ass{Ass} \opn\Min{Min}\opn\Mon{Mon}
\opn\Ann{Ann} \opn\Rad{Rad} \opn\Soc{Soc}
\opn\Im{Im} \opn\Ker{Ker} \opn\Coker{Coker} \opn\Am{Am}
\opn\Hom{Hom} \opn\Tor{Tor} \opn\Ext{Ext} \opn\End{End}
\opn\Aut{Aut} \opn\id{id}
\opn\nat{nat}
\opn\pff{pf}
\opn\Pf{Pf} \opn\GL{GL} \opn\SL{SL} \opn\mod{mod} \opn\ord{ord}
\opn\Gin{Gin} \opn\Hilb{Hilb}\opn\sort{sort}
\opn\PF{PF}\opn\Ap{Ap}
\opn\aff{aff} \opn
\opn\relint{relint} \opn\st{st}
\opn\lk{lk} \opn\cn{cn} \opn\core{core} \opn\vol{vol}  \opn\inp{inp} \opn\nilpot{nilpot}
\opn\link{link} \opn\star{star}\opn\lex{lex}\opn\set{set}
\opn\width{wd}
\opn\Fr{F}
\opn\QF{QF}
\opn\G{G}
\opn\type{type}\opn\res{res}
\opn\gr{gr}
\def\cdeg{deg}
\def\pot#1#2{#1[\kern-0.28ex[#2]\kern-0.28ex]}
\opn\dirlim{\underrightarrow{\lim}}
\opn\inivlim{\underleftarrow{\lim}}
\def\Implies{\ifmmode\Longrightarrow \else
	\unskip${}\Longrightarrow{}$\ignorespaces\fi}
\def\implies{\ifmmode\Rightarrow \else
	\unskip${}\Rightarrow{}$\ignorespaces\fi}
\def\iff{\ifmmode\Longleftrightarrow \else
	\unskip${}\Longleftrightarrow{}$\ignorespaces\fi}
\def\opn#1#2{\def#1{\operatorname{#2}}} 
\opn\chara{char} \opn\length{\ell} \opn\pd{pd} \opn\rk{rk}
\opn\projdim{proj\,dim} \opn\injdim{inj\,dim} \opn\rank{rank}
\opn\depth{depth} \opn\grade{grade} \opn\height{height}
\opn\embdim{emb\,dim} \opn\codim{codim}
\opn\Tr{Tr} \opn\bigrank{big\,rank}
\opn\superheight{superheight}\opn\lcm{lcm}
\opn\trdeg{tr\,deg}
\opn\reg{reg} \opn\lreg{lreg} \opn\ini{in} \opn\lpd{lpd}
\opn\size{size} \opn\sdepth{sdepth}
\opn\link{link}\opn\fdepth{fdepth}\opn\lex{lex}
\opn\tr{tr}
\opn\type{type}
\opn\gap{gap}
\opn\div{div} \opn\Div{Div} \opn\cl{cl} \opn\Cl{Cl}
\opn\Spec{Spec} \opn\Supp{Supp} \opn\supp{supp} \opn\Sing{Sing}
\opn\Ass{Ass} \opn\Min{Min}\opn\Mon{Mon}
\opn\Ann{Ann} \opn\Rad{Rad} \opn\Soc{Soc}
\opn\Im{Im} \opn\Ker{Ker} \opn\Coker{Coker} \opn\Am{Am}
\opn\Hom{Hom} \opn\Tor{Tor} \opn\Ext{Ext} \opn\End{End}
\opn\Aut{Aut} \opn\id{id}
\opn\nat{nat}
\opn\pff{pf}
\opn\Pf{Pf} \opn\GL{GL} \opn\SL{SL} \opn\mod{mod} \opn\ord{ord}
\opn\Gin{Gin} \opn\Hilb{Hilb}\opn\sort{sort}
\opn\PF{PF}\opn\Ap{Ap}
\opn\aff{aff}
\opn\relint{relint} \opn\st{st}
\opn\lk{lk} \opn\cn{cn} \opn\core{core} \opn\vol{vol}  \opn\inp{inp} \opn\nilpot{nilpot}
\opn\link{link} \opn\star{star}\opn\lex{lex}\opn\set{set}
\opn\width{wd}
\opn\Fr{F}
\opn\QF{QF}
\opn\G{G}
\opn\type{type}\opn\res{res}
\opn\conv{conv}
\opn\gr{gr}
\def\pot#1#2{#1[\kern-0.28ex[#2]\kern-0.28ex]}
\opn\dirlim{\underrightarrow{\lim}}
\opn\inivlim{\underleftarrow{\lim}}
\opn\aff{aff} \opn
\opn\relint{relint} \opn\st{st}
\opn\lk{lk} \opn\cn{cn} \opn\core{core} \opn\vol{vol}  \opn\inp{inp} \opn\nilpot{nilpot}
\opn\link{link} \opn\star{star}\opn\lex{lex}\opn\set{set}
\opn\width{wd}
\opn\Fr{F}
\opn\QF{QF}
\opn\G{G}
\opn\type{type}\opn\res{res}
\opn\gr{gr}
\def\cdeg{deg}
\def\pot#1#2{#1[\kern-0.28ex[#2]\kern-0.28ex]}
\opn\dirlim{\underrightarrow{\lim}}
\opn\inivlim{\underleftarrow{\lim}}
\def\Implies{\ifmmode\Longrightarrow \else
	\unskip${}\Longrightarrow{}$\ignorespaces\fi}
\def\implies{\ifmmode\Rightarrow \else
	\unskip${}\Rightarrow{}$\ignorespaces\fi}
\def\iff{\ifmmode\Longleftrightarrow \else
	\unskip${}\Longleftrightarrow{}$\ignorespaces\fi}
\opn\dis{dis}
\def\pnt{{\raise0.5mm\hbox{\large\bf.}}}
\opn\Lex{Lex}
\def\Implies{\ifmmode\Longrightarrow \else
	\unskip${}\Longrightarrow{}$\ignorespaces\fi}
\def\implies{\ifmmode\Rightarrow \else
	\unskip${}\Rightarrow{}$\ignorespaces\fi}
\def\iff{\ifmmode\Longleftrightarrow \else
	\unskip${}\Longleftrightarrow{}$\ignorespaces\fi}
\newtheorem{Theorem}{Theorem}[section]
\newtheorem{Lemma}[Theorem]{Lemma}
\newtheorem{Corollary}[Theorem]{Corollary}
\newtheorem{Remark}[Theorem]{Remark}
\newtheorem{Example}[Theorem]{Example}
\newtheorem{Definition}[Theorem]{Definition}
\let\epsilon\varepsilon
\let\kappa=\varkappa
\def\qed{\ifhmode\textqed\fi
	\ifmmode\ifinner\quad\qedsymbol\else\dispqed\fi\fi}
\def\textqed{\unskip\nobreak\penalty50
	\hskip2em\hbox{}\nobreak\hfil\qedsymbol
	\parfillskip=0pt \finalhyphendemerits=0}
\def\dispqed{\rlap{\qquad\qedsymbol}}
\opn\dis{dis}
\def\pnt{{\raise0.5mm\hbox{\large\bf.}}}
\opn\Lex{Lex}
\begin{document}
	\title {The  facet ideals of  matching complexes of line graphs}
	
	\author {Guangjun Zhu$^{^*}$\!\!\!,  Hong Wang, Yijun Cui}

	\address{Authors address:  School of Mathematical Sciences, Soochow
		University, Suzhou 215006, P.R. China}
	\email{zhuguangjun@suda.edu.cn(Corresponding author:Guangjun Zhu),
		\linebreak[4]651634806@qq.com(Hong Wang), 237546805@qq.com(Yijun Cui).}

\dedicatory{ }

\setcounter{tocdepth}{1}

	\dedicatory{ }
	
	\begin{abstract}
	Let $L_n$ be a line graph with $n$ edges and $\F(L_n)$ the facet ideal  of its matching complex. In this paper, we provide the irreducible decomposition of $\F(L_n)$ and some exact formulas for the  projective dimension and the  regularity of  $\F(L_n)$.
	\end{abstract}
	
	\thanks{* Corresponding author}
	
	\subjclass[2010]{Primary 13F55; Secondary 13D05, 13C15,13D02}
	
	
	\keywords{matching complex, irreducible decomposition, facet ideal, projective dimension, regularity}
	
	\maketitle
	
	\setcounter{tocdepth}{1}

\section*{Introduction}
Graph complexes  have provided an important link between combinatorics and algebra, topology, and geometry (see e.g.,\cite{B,J,W}).
Some well-studied examples are  matching complexes and independence complexes of graphs.
Let $G$ be  a finite simple undirected graph  (i.e., loopless and without multiple
edges)  with the vertex set $V(G)$ and the edge set $E(G)$. A subset $M\subset E(G)$ is a {\it matching} of $G$ if $M$ contains no
adjacent edges.  A collection of all matchings  of $G$  forms a  simplicial complex $\mathcal{M}(G)$, whose vertex set is the set of edges of $G$ and whose $k$-dimensional faces are matchings  of size $(k+1)$,
we call $\mathcal{M}(G)$   the {\it matching complex} of $G$. A subset $W\subset V(G)$ is called an {\it independent} set if no edge of $G$ has
both endpoints in $W$. A collection of all independent subsets  of $G$ also forms a  simplicial complex $\text{Ind}(G)$,
whose vertex set is  $V(G)$ and whose face set are all independent sets of $G$, we call $\text{Ind}(G)$ to be the {\it independent complex} of $G$.

Topological and geometric properties of the matching complex and independence complexes  of  some graphs has been studied by many authors (See \cite{BLVZ,E1,E2,EH,FH,K1,K2,RR,Zi}).
The matching complex  of a complete bipartite graph $K_{m,n}$  was first introduced in the  thesis of  Garst \cite{G} dealing with Tits coset complexes.
He showed that $\mathcal{M}(K_{m,n})$ is
Cohen-Macaulay if and only if $n\geq 2m-1$.
Ziegler \cite{Zi} strengthened this result by showing that  $\mathcal{M}(K_{m,n})$ is shellable if $n\geq 2m-1$. Consequently,  $\mathcal{M}(K_{m,n})$ has the homotopy type of a wedge of $(m-1)$-spheres when $n\geq 2m-1$.
Ehrenborg-Hetyei \cite{EH} and Engstr\"om \cite{E2}  showed that the independence complex $\text{Ind}(F)$ of a  forest $F$
is either contractible or is homotopy equivalent to a $(\gamma(F)-1)$-sphere, and is also $\lfloor\frac{n-1}{2d}-1\rfloor$-connected, where $n$ is the number of vertices, $d$ is  the maximal degree of vertices, $\gamma(F)$ denotes the domination number of $F$,   and $\lfloor\frac{n-1}{2d}-1\rfloor$ is the largest integer $\leq \frac{n-1}{2d}-1$.

Let $\Delta$ be a simplicial complex on the vertex set $V=\{x_i\mid 1\leq i\leq n\}$, $\text{Facets}\,(\Delta)$ denote a collection of  all facets (maximal faces under inclusion)  of $\Delta$. By identifying the vertex $x_i$ with the variable $x_i$ in the polynomial ring $S=k[x_1,\dots, x_n]$ over a field $k$, one can associate $\Delta$  with two  squarefree monomial
ideals of $S$	
$$
\I_{\Delta}=(x_{i_1}\cdots x_{i_s}\mid\{x_{i_1},\ldots, x_{i_s}\} \text{\ is not a face of \ } \Delta ),
$$
and
$$
\F(\Delta)=(x_{\ell_1}\cdots x_{\ell_s}\mid\{x_{\ell_1},\ldots, x_{\ell_s}\} \in \text{Facets}\,(\Delta) ).
$$
They are called the  {\it Stanley-Reisner} ideal and the {\it facet ideal} of $\Delta$ respectively.
The Stanley-Reisner ring of $\Delta$ is $k[\Delta]=S/\I_{\Delta}$.

	Some algebraic properties of the matching complexes $\mathcal{M}(K_{m,n})$ have been studied in \cite{BLVZ,FH,JZWZ}.
	Bj\" orner et al. in \cite{BLVZ} showed that  $\depth\,( k[\mathcal{M}(K_{m,n})])=\min\{m,n,\lfloor\frac{m+n+1}{3}\rfloor\}$ for any $m,n$.
	Freidman-Hanlon in \cite{FH}  showed that $b_{r-1}(\mathcal{M}(K_{m,n}))\\
	=0$ if and only if $(m-r)(n-r)>r$, and $b_{v-1}(\mathcal{M}(K_{m,n}))>0$ if and only if $n\geq 2m-4$ or $(m,n)=(6,6),(7,7),(8,9)$,
	where $b_{i}(\mathcal{M}(K_{m,n}))$ is the $i$-th Betti number of $\mathcal{M}(K_{m,n})$, it equals the rank of the homology group $H_i(\mathcal{M}(K_{m,n}))$.
	Jiang et al. in \cite{JZWZ} described the irreducible decomposition of the facet ideal $\F(\mathcal{M}(K_{m,n}))$ of the matching complexe $\mathcal{M}(K_{m,n})$ and provided some lower bounds for depth and regularity of the facet ideal  $\F(\Delta_{m,n})$, where $n\geq m$. They also showed that these lower bounds can be obtained if $m\leq3$.
	
	In this article, we are interested in  algebraic properties of the facet ideal $\F(L_n)$ of the matching complex $\mathcal{M}(L_n)$ of a line graph $L_n$ with $n$ edges.
	By the definition of the matching complex and the independence complex of a line graph,
	we can easily obtain that $\mathcal{M}(L_n)$ is isomorphic to $\text{Ind}\,(L_{n-1})$, and the Stanley-Reisner ideal $\I_{\mathcal{M}(L_n)}$ of $\mathcal{M}(L_n)$
	is actually the edge ideal of a line graph with $n$ vertices, which was studied in \cite{AF,HT,Zhu}.
	This complex has an extremely simple structure, but it is still worth discussing, as they
	appear naturally in many situations. For some examples, see \cite{J,K2}.  The first author in \cite[Theorem 3.3]{Zhu} provided some exact formulas for the  projective dimension and the  regularity of  $\I_{\mathcal{M}(L_n)}$ for any $n\geq 2$,
	she showed that
	\[
	\pd\,(\I_{\mathcal{M}(L_n)})=\left\{\begin{array}{ll}
		2p-1\ \ &\text{if}\ n=3p \text{\ or\ } n=3p+1,\\
		2p\ \ &\text{if}\ n=3p+2,
	\end{array}\right.
	\]
	and
	\[
	\reg\,(\I_{\mathcal{M}(L_n)})=\left\{\begin{array}{ll}
		p+1\ \ &\text{if}\ \ n=3p, \text{\ or\ } n=3p+1,\\
		p+2\ \ &\text{if}\ \ n=3p+2.
	\end{array}\right.
	\]
	See also \cite[Corollary 4.15]{AF} and \cite[Theorem 4.1]{HT}.
	In this paper, we will provide the irreducible decomposition  and some exact formulas for the  projective dimension and the  regularity of  the facet ideal $\F(L_n)$ (see Theorem \ref{decompose2}, Theorem \ref{line2} and Corollary \ref{line3}).

	Our paper is organized as follows.  In the preliminary  section, we collect necessary terminology and results from the
	literature. In Section $2$, we give the irreducible decomposition of the facet ideal $\F(L_n)$ of a line graph $L_n$ with $n$ edges. In Section $3$,
	we give some exact formulas for  projective dimension and the regularity of    $\F(L_n)$ by  some suitable short exact sequences.

	\medskip

	\section{Preliminaries }
	
	In this section, we gather together the needed  notations and basic facts, which will
	be used throughout this paper. However, for more details, we refer the reader to \cite{BH,F,HH1,J}.
	
	Let $n$ be a positive integer and $[n]=\{i\mid 1\leq i\leq n\}$. In this paper, we will assume that $L_n$ is a line graph  with  edge set $\{x_i\mid i\in [n]\}$. For  simplicity,
	we denote by $\F(L_n)$ the facet ideal of the matching complex $\mathcal{M}(L_n)$ of $L_n$.
	\begin{Example} \label{example1} Let $L_6$ be a line graph  with  six  edges  $x_1,\ldots,x_6$. Then the  facet ideal  of its matching complex is
		$$
		\F(L_6)=(x_1x_3x_5,x_1x_3x_6,x_1x_4x_6,x_2x_4x_6,x_2x_5).
		$$
	\end{Example}
	
	A monomial ideal is called {\em irreducible} if it cannot be written as proper intersection of two other monomial ideals.
	It is called {\em reducible} if it is not irreducible. It is well known that a monomial ideal is irreducible if and only if it is generated
	by pure powers of the variables, that is, it  has  the form $(x_{i_1}^{a_1},\ldots,x_{i_k}^{a_k})$. The  following lemma is a fundamental fact.
	\begin{Lemma} \label{decomposition}{\em(\cite[Theorem 1.3.1]{HH1})}
		Let $I\subset S$ be a monomial ideal. Then there exists a unique decomposition
		$$I= Q_1\cap\cdots\cap Q_r$$
		such that  none of the  $Q_i$  can be omitted in this intersection and each  $Q_i$ is an irreducible monomial ideal.
		In particular, if $I$ is  squarefree, then each $Q_i$ is a  minimal {\em(}under inclusion{\em)} prime  ideal over $I$.
	\end{Lemma}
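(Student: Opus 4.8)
The plan is to prove the three assertions of the lemma in order: first characterize the irreducible monomial ideals, then establish existence of a finite irreducible decomposition, then its uniqueness, and finally specialize to the squarefree case. For the characterization, suppose first that a monomial ideal $Q$ has a minimal generator $u$ that is not a pure power, and write $u=vw$ with $v,w\neq 1$ and $\gcd(v,w)=1$. Since $u$ is a minimal generator, neither $v$ nor $w$ lies in $Q$, so $(Q,v)$ and $(Q,w)$ both strictly contain $Q$, and a short check on monomials gives $Q=(Q,v)\cap(Q,w)$; hence $Q$ is reducible. Conversely, if $Q=(x_{i_1}^{a_1},\dots,x_{i_k}^{a_k})$ and $Q=J_1\cap J_2$ with monomial ideals $J_1,J_2\supsetneq Q$, choose monomials $f\in J_1\setminus Q$ and $g\in J_2\setminus Q$; then $\deg_{x_{i_j}}(f)<a_j$ and $\deg_{x_{i_j}}(g)<a_j$ for all $j$, so $\lcm(f,g)$ also satisfies $\deg_{x_{i_j}}(\lcm(f,g))<a_j$ for all $j$ and thus $\lcm(f,g)\notin Q$, while $\lcm(f,g)\in J_1\cap J_2=Q$, a contradiction. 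So $Q$ is irreducible exactly when it is generated by pure powers of distinct variables.

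For existence I would use Noetherian induction. If the collection of monomial ideals admitting no finite irreducible decomposition were nonempty, it would contain a maximal element $I$ because $S$ is Noetherian; this $I$ is not irreducible, so $I=J_1\cap J_2$ properly, and by maximality each $J_i$ has a finite irreducible decomposition, whence so does $I$, a contradiction. Deleting superfluous components then produces an irredundant decomposition.

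The heart of the argument is uniqueness. Given two irredundant irreducible decompositions $I=\bigcap_{i=1}^r Q_i=\bigcap_{j=1}^s Q_j'$, I would show each $Q_j'$ contains some $Q_i$. Writing $Q_j'=(x_t^{c_t}:t\in T)$, consider for large $N$ the inflated corner monomial $w_N=\prod_{t\in T}x_t^{c_t-1}\cdot\prod_{u\notin T}x_u^{N}$; it never lies in $Q_j'$, hence not in $I$, hence is missing from some $Q_i$, and letting $N\to\infty$ forces a single such index $i$ whose support lies in $T$ with $c_t\le a_{it}$ on that support, i.e. $Q_i\subseteq Q_j'$. By symmetry each $Q_i$ also contains some $Q_l'$, giving $Q_i\supseteq Q_l'\supseteq Q_m$ for suitable indices; irredundancy of $\bigcap Q_i$ forbids a proper containment between two distinct components, so $i=m$ and all the containments are equalities. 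This pairs the two families bijectively, yielding $r=s$ and $\{Q_i\}=\{Q_j'\}$. I expect this step to be the main obstacle: the naive corner $\prod_{t\in T}x_t^{c_t-1}$ only records the exponents on the support $T$ of $Q_j'$ and can fail to force $Q_i\subseteq Q_j'$ when $Q_i$ has support outside $T$, which is precisely why inflating the off-support exponents via the factors $x_u^{N}$ (and checking that one index $i$ works uniformly for all large $N$) is needed.

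Finally, for squarefree $I$ I would invoke that such ideals are radical, so $I=\bigcap_{\pp\in\Min(I)}\pp$, together with the fact that the minimal primes of a monomial ideal are the monomial primes $\pp_F=(x_j:j\in F)$, each an exponent-one pure-power ideal and hence irreducible, the intersection being irredundant because distinct minimal primes are incomparable. By the uniqueness just established this must coincide with the irreducible decomposition, so every $Q_i$ is a minimal prime over $I$.
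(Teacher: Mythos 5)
This lemma is quoted in the paper from \cite[Theorem 1.3.1]{HH1} without proof, so there is no internal argument to compare against; judged on its own, your proof is correct and complete. Each step checks out: the splitting $Q=(Q,v)\cap(Q,w)$ for a non-pure-power minimal generator $u=vw$ with $\gcd(v,w)=1$ works because a monomial outside $Q$ lying in both $(Q,v)$ and $(Q,w)$ would be divisible by $\lcm(v,w)=vw=u\in Q$; the converse via $\lcm(f,g)$ is the standard exponentwise argument; Noetherian induction gives existence; and your uniqueness argument is sound, including the point you flag yourself --- the sets $A_N=\{i: w_N\notin Q_i\}$ are nonempty and decreasing in $N$, so a single index $i$ survives for all $N$, which forces the support of $Q_i$ into $T$ and the exponents to dominate, hence $Q_i\subseteq Q_j'$; irredundancy then upgrades the mutual domination to a bijection. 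Your route differs mildly from the one in the cited source: Herzog--Hibi prove the key uniqueness step as the statement that if an intersection of irreducible monomial ideals is contained in an irreducible $Q$, then some single component already is, by picking from each $Q_i$ a pure-power generator outside $Q$ and observing that their $\lcm$ lies in $\bigcap Q_i\subseteq Q$, so some pure-power generator of $Q$ divides it and hence divides one of the chosen generators --- a contradiction. That argument is a little slicker (no limit over $N$), but your ``inflated corner monomial'' witness is equally valid and arguably more transparent about why off-support variables cause no trouble. For the squarefree case you invoke that squarefree monomial ideals are radical and intersect their (incomparable, hence irredundantly intersecting) minimal primes, then appeal to uniqueness; this is legitimate, whereas the textbook derives the same conclusion directly by noting that the splitting procedure applied to a squarefree ideal only ever produces exponent-one components. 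No gaps.
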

	This decomposition is called  {\it irredundant presentation}  of $I$ and each $Q_i$ is called an {\it irreducible
		component} of $I$.

	\medskip
	\begin{Definition} \label{cover}
		Let $\Delta$  be a simplicial complex on the vertex set $V$. A {\it  vertex cover} of $\Delta$  is a subset $C\subset V$ such that  each facet of $\Delta$ has at least one
		vertex in $C$.  Such a vertex cover $C$ is called minimal if no subset $C'\subsetneq C$ is a vertex cover
		of $\Delta$.
	\end{Definition}
	
	We need the following lemma.
	\begin{Lemma} \label{prime}{\em (\cite[Proposition 1.8]{F})} Let  $\Delta$  be a simplicial complex, $\F(\Delta)$ its facet ideal.
		Then $P=(x_{i_1},\ldots,x_{i_s})$ is a minimal prime ideal over $\F(\Delta)$ if and only if $\{x_{i_j}\mid 1\leq j\leq s\}$ is a minimal vertex cover for $\Delta$.
	\end{Lemma}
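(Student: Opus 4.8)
The plan is to reduce the statement to the standard description of the minimal primes of a squarefree monomial ideal, applied to the specific generators of a facet ideal. First I would record the shape of the generators: $\F(\Delta)$ is the squarefree monomial ideal generated by the facet monomials $u_F=\prod_{x_i\in F}x_i$ as $F$ ranges over $\text{Facets}\,(\Delta)$. By the final assertion of Lemma \ref{decomposition}, each irreducible component of the squarefree ideal $\F(\Delta)$ is a minimal prime over $\F(\Delta)$ and is generated by pure powers of variables, hence by variables themselves; in particular every minimal prime has the form $P=(x_{i_1},\ldots,x_{i_s})$. So it suffices to characterize, among primes generated by subsets of the variables, those that contain $\F(\Delta)$, and then to pin down minimality.

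The key step is the containment criterion. For a prime $P=(x_{i_1},\ldots,x_{i_s})$ generated by variables, a monomial $u$ lies in $P$ if and only if $u$ is divisible by at least one of $x_{i_1},\ldots,x_{i_s}$; this elementary divisibility fact is what drives the whole correspondence. Applying it to each facet generator $u_F$, we obtain that $\F(\Delta)\subseteq P$ if and only if every facet $F$ of $\Delta$ contains at least one of the vertices $x_{i_1},\ldots,x_{i_s}$. By Definition \ref{cover}, this is precisely the statement that $C=\{x_{i_j}\mid 1\leq j\leq s\}$ is a vertex cover of $\Delta$. Thus the variable-primes containing $\F(\Delta)$ correspond bijectively, via $P\mapsto C$, to the vertex covers of $\Delta$.

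Finally I would transfer minimality across this correspondence. Among primes generated by variables, the inclusion $P\subseteq P'$ holds if and only if the corresponding variable sets satisfy $C\subseteq C'$, so the correspondence $P\mapsto C$ is an inclusion-preserving bijection from variable-primes over $\F(\Delta)$ to vertex covers of $\Delta$. Consequently $P$ is minimal among primes containing $\F(\Delta)$ if and only if $C$ is minimal (under inclusion) among the vertex covers of $\Delta$, which is exactly the definition of a minimal vertex cover. Combining both directions gives the asserted equivalence.

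The main obstacle here is not analytic but organizational: one must justify that the minimal primes may be taken to be variable-primes, so that the divisibility criterion is available, and one must be careful that minimality of the prime matches minimality of the cover under inclusion rather than, say, minimality of cardinality. Both the reduction to variable-primes (via Lemma \ref{decomposition}) and the divisibility characterization of membership in a monomial prime are standard for squarefree monomial ideals, so no genuinely difficult input is required once the dictionary between facet monomials and vertex covers is set up carefully.
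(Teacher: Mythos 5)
Your argument is correct. Note that the paper itself gives no proof of this lemma --- it is quoted as \cite[Proposition 1.8]{F} --- so there is nothing internal to compare against; your reduction to variable-generated minimal primes via Lemma \ref{decomposition}, the divisibility criterion for membership of a facet monomial in a monomial prime, and the inclusion-preserving transfer of minimality is exactly the standard argument one would find in Faridi's paper, and it is complete.
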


	\medskip
	Let  $I\subset S$ be a non-zero homogeneous ideal and
	$$0\rightarrow \bigoplus\limits_{j}S(-j)^{\beta_{p,j}(I)}\rightarrow \bigoplus\limits_{j}S(-j)^{\beta_{p-1,j}(I)}\rightarrow \cdots\rightarrow \bigoplus\limits_{j}S(-j)^{\beta_{0,j}(I)}\rightarrow I\rightarrow 0$$
	is a {\em minimal graded free resolution} of $I$, where $p\leq n$ and $S(-j)$ is an $S$-module obtained by shifting
	the degrees of $S$ by $j$. The number
	$\beta_{i,j}(I)$, the $(i,j)$-th graded Betti number of $I$, is
	an invariant of $I$ that equals the minimal number of  generators of degree $j$ in the
	$i$th syzygy module of $I$.
	Of particular interest is the following invariants which measure the ¡°size¡± of the minimal graded
	free resolution of $I$.
	The {\it projective dimension} of $I$, denoted by $\pd\,(I)$, is defined to be
	$$\mbox{pd}\,(I):=\mbox{max}\,\{i\ |\ \beta_{i,j}(I)\neq 0\}.$$
	The {\it regularity} of $I$, denoted by $\mbox{reg}\,(I)$, is defined by
	$$\mbox{reg}\,(I):=\mbox{max}\,\{j-i\ |\ \beta_{i,j}(I)\neq 0\}.$$

	\medskip
	
	In order to compute the projective dimension and regularity of a non-zero homogeneous ideal,  we shall use the following lemmas in this paper.
	
	\begin{Lemma}\label{sum}{\em (\cite[Lemmas 2.2 and 3.2]{HT1})}
		Let $S_{1}=k[x_{1},\dots,x_{m}]$, $S_{2}=k[x_{m+1},\dots,x_{n}]$ be two polynomial rings and $S=S_1\otimes_k S_2$, let $I\subset S_{1}$,
		$J\subset S_{2}$ be two non-zero homogeneous ideals. Then
		\begin{itemize}
			\item[(1)] $\pd\,(I+J)=\pd\,(I)+\pd\,(J)+1$,
			\item[(2)] $\reg\,(I+J)=\reg\,(I)+\reg\,(J)-1$.
		\end{itemize}
	\end{Lemma}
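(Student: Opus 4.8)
The plan is to pass from the ideals to the quotient rings and to exploit the fact that, over the ground field, $S/(I+J)$ is a tensor product of the two quotient rings. Writing $\mm_1=(x_1,\dots,x_m)$, $\mm_2=(x_{m+1},\dots,x_n)$ and $\mm=(x_1,\dots,x_n)$ for the respective graded maximal ideals, the first observation is the natural graded isomorphism
\[
S/(I+J)\iso (S_1/I)\tensor_k(S_2/J).
\]
This reduces both statements to a comparison of the minimal graded free resolution of $S/(I+J)$ over $S$ with those of $S_1/I$ over $S_1$ and of $S_2/J$ over $S_2$.

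The key step is to build this resolution as a tensor product. Let $F_\bullet\To S_1/I$ and $G_\bullet\To S_2/J$ be minimal graded free resolutions over $S_1$ and $S_2$. Since $S=S_1\tensor_k S_2$, the total complex $F_\bullet\tensor_k G_\bullet$ is a complex of graded free $S$-modules, and because $k$ is a field the K\"unneth formula gives
\[
H_i(F_\bullet\tensor_k G_\bullet)\iso \bigoplus_{a+b=i}H_a(F_\bullet)\tensor_k H_b(G_\bullet),
\]
which vanishes for $i>0$ and equals $(S_1/I)\tensor_k(S_2/J)\iso S/(I+J)$ for $i=0$. Hence $F_\bullet\tensor_k G_\bullet$ resolves $S/(I+J)$. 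Its differential is $d_F\tensor 1\pm 1\tensor d_G$, and since $F_\bullet$ and $G_\bullet$ are minimal, every matrix entry lies in $\mm_1$ or in $\mm_2$, hence in $\mm$; therefore the tensor product resolution is again minimal.

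Minimality then lets me read off the invariants directly. The length of $F_\bullet\tensor_k G_\bullet$ is the sum of the lengths of the two factors, giving
\[
\pd_S(S/(I+J))=\pd_{S_1}(S_1/I)+\pd_{S_2}(S_2/J),
\]
while the graded Betti numbers satisfy
\[
\beta_{i,j}^S(S/(I+J))=\sum_{\substack{a+b=i\\ c+d=j}}\beta_{a,c}^{S_1}(S_1/I)\,\beta_{b,d}^{S_2}(S_2/J).
\]
Since $j-i=(c-a)+(d-b)$ and all Betti numbers are non-negative, maximizing $j-i$ over the nonzero terms yields $\reg_S(S/(I+J))=\reg_{S_1}(S_1/I)+\reg_{S_2}(S_2/J)$. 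Finally I would translate back to ideals via the short exact sequence $0\to I\to S_1\to S_1/I\to 0$ and its analogues, which give $\pd_{S_1}(I)=\pd_{S_1}(S_1/I)-1$, $\reg_{S_1}(I)=\reg_{S_1}(S_1/I)+1$, and likewise for $J$ and for $I+J$. Substituting these three pairs of relations into the two displayed equalities produces precisely (1) and (2).

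The main obstacle is the simultaneous verification that $F_\bullet\tensor_k G_\bullet$ is exact and minimal. Exactness rests squarely on $k$ being a field, so that all higher $\Tor^k$ vanish and K\"unneth degenerates to a plain tensor product of homologies; minimality rests on the variables of $S_1$ and $S_2$ being disjoint, which forces the entries coming from $d_F$ and from $d_G$ to involve different variables and hence never to cancel into a unit. Once these two points are secured, the remaining passage to $\pd$ and $\reg$ is only bookkeeping.
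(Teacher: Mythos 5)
The paper does not prove this lemma itself but quotes it from \cite[Lemmas 2.2 and 3.2]{HT1}, and your argument is correct and is essentially the standard proof underlying that reference: identify $S/(I+J)$ with $(S_1/I)\otimes_k(S_2/J)$, check via K\"unneth and the disjointness of the variables that the tensor product of the two minimal free resolutions is a minimal free resolution, read off $\pd$ and the graded Betti numbers, and shift by one when passing between an ideal and its quotient ring (which is where the hypothesis that $I$ and $J$ are nonzero is used). No gaps; the only cosmetic remark is that minimality needs no ``non-cancellation'' argument, since each block of the differential $d_F\otimes 1\pm 1\otimes d_G$ already has all entries in $\mm_1$ or in $\mm_2$, hence in $\mm$.
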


	\begin{Lemma}\label{exact} {\em \cite[Lemma 1.1 and Lemma 1.2]{HTT}} Let\ \ $0\rightarrow A \rightarrow  B \rightarrow  C \rightarrow 0$\ \  be a short exact sequence of finitely generated graded $S$-modules. Then
		\begin{itemize}
			\item[(1)]$\reg\,(B)\leq \max\,\{\reg\,(A), \reg\,(C)\}$,  the equality holds if $\reg\,(A)-1\neq\reg\,(C)$,
			\item[(2)]$\reg\,(C)\leq \max\,\{\reg\,(A)-1, \reg\,(B)\}$, the equality holds if $\reg\,(A)\neq\reg\,(B)$,
			\item[(3)]$\pd\,(B)\leq \max\,\{\pd\,(A), \pd\,(C)\}$,  the equality holds if $\pd\,(A)+1\neq\pd\,(C)$,
			\item[(4)]$\pd\,(C)\leq \max\,\{\pd\,(A)+1, \pd\,(B)\}$, the equality holds if $\pd\,(A)\neq\pd\,(B)$.
		\end{itemize}
	\end{Lemma}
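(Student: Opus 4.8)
The plan is to deduce all four statements from the single long exact sequence in $\Tor$ obtained by tensoring the given short exact sequence with the residue field $k$. Recall that the graded Betti numbers are computed by $\beta_{i,j}(M)=\dim_k\Tor_i^S(k,M)_j$, so that $\pd(M)=\max\{i\mid \Tor_i^S(k,M)\neq 0\}$ and $\reg(M)=\max\{j-i\mid \Tor_i^S(k,M)_j\neq 0\}$. Applying $\Tor_\bullet^S(k,-)$ to $0\to A\to B\to C\to 0$ yields, for every internal degree $j$, the exact sequence
$$
\cdots\to \Tor_i^S(k,A)_j\to \Tor_i^S(k,B)_j\to \Tor_i^S(k,C)_j\xrightarrow{\ \partial\ } \Tor_{i-1}^S(k,A)_j\to\cdots,
$$
in which the connecting map $\partial$ lowers the homological degree by one while preserving $j$. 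Everything is read off from this one sequence.

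First I would prove the four inequalities by three-term exactness. Since $\Tor_i^S(k,A)_j\to\Tor_i^S(k,B)_j\to\Tor_i^S(k,C)_j$ is exact, the middle term vanishes whenever both outer terms do; translating the vanishing ranges of the outer terms into conditions on $j-i$ (resp. on $i$) gives (1) and (3) at once. For (2) and (4) one instead uses the three consecutive terms $\Tor_i^S(k,B)_j\to\Tor_i^S(k,C)_j\to\Tor_{i-1}^S(k,A)_j$: the middle term vanishes when the outer two do, and the degree shift built into $\partial$ accounts for the $-1$ in $\reg(A)-1$ (resp. the $+1$ in $\pd(A)+1$).

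The main work is the sharpness clauses, which I would handle by locating an extremal Betti number of $B$ (resp. of $C$). For (1), set $m=\max\{\reg(A),\reg(C)\}$ and pick $(i,j)$ with $j-i=m$ realizing this maximum. If the value is attained on $C$, then the connecting target $\Tor_{i-1}^S(k,A)_j$ sits at internal offset $m+1>\reg(A)$ and hence vanishes, so $\Tor_i^S(k,B)_j$ surjects onto the nonzero group $\Tor_i^S(k,C)_j$; if it is attained on $A$ with $\reg(A)>\reg(C)$, then the hypothesis $\reg(A)-1\neq\reg(C)$ upgrades $\reg(A)-1\geq\reg(C)$ to a strict inequality, forcing the incoming term $\Tor_{i+1}^S(k,C)_j$ (offset $\reg(A)-1$) to vanish, so that $\Tor_i^S(k,A)_j$ injects into $\Tor_i^S(k,B)_j$. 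Either way $\Tor_i^S(k,B)_j\neq 0$ and $\reg(B)\geq m$. This dichotomy is exhaustive, since whenever $\reg(C)<m$ one necessarily has $\reg(A)=m>\reg(C)$. Statements (2), (3), (4) follow by the identical extremal analysis, each time invoking the stated non-equality hypothesis precisely to kill the one neighboring $\Tor$ term that could otherwise cause cancellation (for (3) this is exactly the case $\pd(A)+1=\pd(C)$, in which the connecting map can be injective and force $\Tor_{\pd(C)}^S(k,B)=0$).

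The step I expect to require the most care is the bookkeeping in these sharpness arguments: one must track both the homological and the internal degree through the connecting map at the same time and verify, under each non-equality hypothesis, that exactly the right neighboring term falls outside the regularity (resp. projective-dimension) range. Once the heuristic ``an injection detects $A$, a surjection detects $C$, and $\partial$ absorbs the degree shift'' is set up, the remaining cases are routine.
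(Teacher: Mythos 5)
Your proof is correct. Note that the paper does not prove this lemma at all --- it is imported verbatim with a citation to H\`a--Trung--Trung --- so there is nothing internal to compare against; your long exact sequence in $\Tor^S_\bullet(k,-)$ argument is the standard proof found in that reference, and your bookkeeping (in particular identifying $\pd(A)+1=\pd(C)$ as the one case where the connecting map can cancel $\Tor_{\pd(C)}^S(k,B)$, and using the non-equality hypotheses to kill exactly the offending neighboring term) is accurate.
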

	
	\medskip
	Let $\mathcal{G}(I)$ denote the minimal set of generators of a monomial ideal $I\subset S$ and let $u\in S$ be a monomial, we set $\mbox{supp}(u)=\{x_i: x_i|u\}$. If $\mathcal{G}(I)=\{u_1,\ldots,u_m\}$, we set $\mbox{supp}(I)=\bigcup\limits_{i=1}^{m}\mbox{supp}(u_i)$. The following lemma is well known.
	\begin{Lemma}\label{supp}
		Let  $I, J=(u)$ be two monomial ideals  such that $\mbox{supp}(u)\cap \mbox{supp}(I)=\emptyset$, where $u$  is a monomial of degree $m$. Then
		\begin{itemize}
			\item[(1)] $\mbox{reg}\,(J)=m$,
			\item[(2)]$\mbox{pd}\,(uI)=\mbox{pd}\,(I)$,
			\item[(3)]$\mbox{reg}\,(uI)=\mbox{reg}\,(I)+m$.
		\end{itemize}
	\end{Lemma}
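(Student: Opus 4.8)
The plan is to dispatch part (1) directly from the resolution of a principal ideal, and then to obtain parts (2) and (3) from a single graded isomorphism, so that no delicate computation is required.

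For (1), observe that $J=(u)=uS$ is a free $S$-module of rank one. Indeed, the map $S(-m)\to J$ sending $1\mapsto u$ is graded of degree $0$, surjective by construction, and injective because $S$ is an integral domain and $u$ is a nonzero monomial of degree $m$. Hence the minimal graded free resolution of $J$ is $0\to S(-m)\to J\to 0$, so the only nonvanishing Betti number is $\beta_{0,m}(J)=1$. Reading off the definition of regularity gives $\reg\,(J)=m-0=m$, and incidentally $\pd\,(J)=0$.

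For (2) and (3), the key point is that multiplication by $u$ identifies $I$ with $uI$ up to a shift in degree. Writing $I=(v_1,\dots,v_r)$, every element of the product ideal $uI$ can be rewritten as $ug$ for some $g\in I$, so the $S$-linear map $\mu\colon I(-m)\to uI$ given by $\mu(g)=ug$ is surjective; it is injective because $S$ is a domain and $u\neq 0$. Thus $\mu$ is a graded isomorphism and $uI\cong I(-m)$ as graded $S$-modules. Comparing minimal graded free resolutions yields $\beta_{i,j}(uI)=\beta_{i,\,j-m}(I)$ for all $i,j$. Since a degree shift leaves homological degrees untouched, taking the maximal $i$ with $\beta_{i,j}\neq 0$ gives $\pd\,(uI)=\pd\,(I)$, while taking the maximum of $j-i$ gives $\reg\,(uI)=\reg\,(I)+m$.

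The argument is essentially bookkeeping, and the only step needing care is verifying that $\mu$ is a genuine graded isomorphism onto $uI$ — surjectivity from the description of the product ideal, injectivity from $S$ being a domain — and then tracking the degree shift $-m$ correctly through the definitions of $\pd$ and $\reg$. I note that the hypothesis $\supp(u)\cap\supp(I)=\emptyset$ is not actually used in establishing (2) and (3): the isomorphism $uI\cong I(-m)$ holds for \emph{any} nonzero monomial $u$. It is therefore best read as describing the ambient situation in which the lemma will later be applied (together with part (1), where $\supp(u)$ is disjoint from the other variables in play) rather than as an essential ingredient of the homological statements.
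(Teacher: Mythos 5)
Your argument is correct: the paper states this lemma without proof (``The following lemma is well known''), and your verification via the graded isomorphism $uI\cong I(-m)$ induced by multiplication by $u$, together with the free resolution $0\to S(-m)\to J\to 0$ for part (1), is exactly the standard justification. Your observation that the hypothesis $\supp(u)\cap\supp(I)=\emptyset$ is not needed for parts (2) and (3) is also accurate; it matters only for the contexts in which the lemma is applied later in the paper.
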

	
	\section{The irreducible decompositions of the facet ideals of  the matching complexes of  line graphs }
	
	In this section, we will give the irreducible decomposition of the facet ideal $\F(L_{n})$ of  the matching complex of a line graph $L_n$ with $n$ edges.
	
	\medskip
	
	Let $k,n$ be two positive integers with $n>k$, $L'_{n-k}$ denote the line graph with  edge set  $\{x_i\mid i\in [n]\setminus [k]\}$.
	
	\medskip
	\begin{Remark}\label{remark1}
		Let $n$ be a positive integer, $L_n$ a line graph with edge set $\{x_i\mid i\in [n]\}$. Let $\mathcal{M}(L_n)$ be the matching complex of $L_n$. Then $F\in \text{Facets}\,(\mathcal{M}(L_n))$ if and only if $F$ satisfies the following conditions:
		\begin{itemize}
			\item[(1)] If $n=1$, then $F=\{x_1\}$;
			\item[(2)] If $n=2$, then $F=\{x_1\}$ or $F=\{x_2\}$;
			\item[(3)] If $n=3$, then $F=\{x_1,x_3\}$ or $F=\{x_2\}$;
			\item[(4)] If $n\geq 4$, then $F=\{x_{i_j}\mid j\in [k]\}$, where $i_1\in \{1,2\}$, $i_k\in \{n-1,n\}$ and
			$i_{j}-i_{j-1}\in \{2,3\}$ for any $2\leq j\leq k$.
		\end{itemize}
		In particular, if $n\geq 4$, then  $F\in \text{Facets}\,(\mathcal{M}(L_n))$ if and only if $F\setminus \{x_1\}\in \text{Facets}\,(\mathcal{M}(L'_{n-2}))$  when $x_1\in F$, or $F\setminus \{x_2\}\in \text{Facets}\,(\mathcal{M}(L'_{n-3}))$   when $x_2\in F$.
		Similarly, if $n\geq 4$, then $F\in \text{Facets}\,(\mathcal{M}(L_n))$ if and only if $F\setminus \{x_{n-1}\}\in \text{Facets}\,(\mathcal{M}(L_{n-3}))$  when $x_{n-1}\in F$, or $F\setminus \{x_n\}\in \text{Facets}\,(\mathcal{M}(L_{n-2}))$ when $x_{n}\in F$.
	\end{Remark}
	
	\medskip
	From the remark above, we have
	\begin{Remark}\label{remark2}
		Let $n\geq 2$ be an integer, $L_n$ a line graph as in  Remark \ref{remark1}. Let $\F(L_n)$ be the facet ideal
		of the  simplicial complex $\mathcal{M}(L_n)$. Then
		$$\mathcal{G}(\F(L_n))=\mathcal{G}(x_1\F(L'_{n-2}))\cup \mathcal{G}(x_2\F(L'_{n-3}))=\mathcal{G}(x_n\F(L_{n-2}))\cup\mathcal{G}(x_{n-1}\F(L_{n-3})),$$
		where $\{x_1\}\cap \text{supp}\,(\F(L'_{n-3}))=\{x_2\}\cap \text{supp}\,(\F(L'_{n-2}))=\{x_n\}\cap \text{supp}\,(\F(L_{n-3}))=\{x_{n-1}\}\cap \text{supp}\,(\F(L_{n-2}))=\emptyset$.
		For the convenience of marking, we stipulate  $\F(L'_{-1})=\F(L'_{0})=\F(L_{-1})=\F(L_{0})=S$.
	\end{Remark}

\medskip
\begin{Lemma}\label{cover}
	Let $k,n$ be two positive integers with $n>k\geq 3$ and $C\subseteq \{x_i\mid i\in [k-1]\}$. Then
	$C$ is a minimal vertex cover of $\mathcal{M}(L_k)$ if and only if $C$ is a minimal vertex cover of $\mathcal{M}(L_n)$.
\end{Lemma}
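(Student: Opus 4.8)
The plan is to first reduce the statement to the corresponding equivalence for the (not necessarily minimal) vertex-cover property, and then observe that minimality transfers for free. Concretely, I will prove the claim
$$
(\ast)\qquad C\subseteq\{x_i\mid i\in[k-1]\}\ \text{is a vertex cover of}\ \mathcal{M}(L_k)\iff C\ \text{is a vertex cover of}\ \mathcal{M}(L_n).
$$
Granting $(\ast)$, minimality is immediate: every proper subset $C'\subsetneq C$ again lies in $\{x_i\mid i\in[k-1]\}$, so by $(\ast)$ it is a vertex cover of $\mathcal{M}(L_k)$ exactly when it is one of $\mathcal{M}(L_n)$; hence no proper subset of $C$ covers $\mathcal{M}(L_k)$ iff none covers $\mathcal{M}(L_n)$, and $C$ is minimal for one complex precisely when it is minimal for the other. (Through Lemma \ref{prime} this is the assertion that $\F(L_k)$ and $\F(L_n)$ share the same minimal primes supported on $\{x_i\mid i\in[k-1]\}$, but I will argue combinatorially.)

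To prove $(\ast)$, the key object is the ``low part'' of a facet. For a facet $F$ put $F^{\flat}=F\cap\{x_i\mid i\in[k-1]\}$. Since every facet begins at $x_1$ or $x_2$ by Remark \ref{remark1} and $k\geq 3$, each $F^{\flat}$ is nonempty, and because $C\subseteq\{x_i\mid i\in[k-1]\}$ we have $C\cap F=C\cap F^{\flat}$. Thus for either complex, $C$ is a vertex cover if and only if $C$ meets every low part. Consequently $(\ast)$ reduces to the purely combinatorial equality
$$
\{F^{\flat}\mid F\in\text{Facets}\,(\mathcal{M}(L_k))\}=\{F^{\flat}\mid F\in\text{Facets}\,(\mathcal{M}(L_n))\}.
$$

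I will identify both sides with one explicit family, namely the sequences $x_{i_1},\dots,x_{i_t}$ with $i_1\in\{1,2\}$, all $i_j\leq k-1$, consecutive gaps $i_j-i_{j-1}\in\{2,3\}$, and terminal index $i_t\in\{k-3,k-2,k-1\}$. For $\mathcal{M}(L_n)$: if $F$ is a facet and $i_t$ is its largest index $\leq k-1$, then since $n>k$ the facet continues, so its next index $i_{t+1}\geq k$ forces $i_t\geq k-3$ because $i_{t+1}-i_t\leq 3$; conversely any such sequence extends to a facet of $\mathcal{M}(L_n)$ by appending a suitable index in $\{k,\dots,n\}$ (namely $i_t+2$ when $i_t\geq k-2$, and $i_t+3=k$ when $i_t=k-3$) and then continuing with gaps $2,3$ up to a legal endpoint $n-1$ or $n$. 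For $\mathcal{M}(L_k)$: a facet ends at $k-1$ or $k$ by Remark \ref{remark1}, and deleting the index $k$ when it occurs leaves a sequence terminating at $k-1$, $k-2$, or $k-3$; every such sequence is in turn the low part of an honest facet of $\mathcal{M}(L_k)$ (append $x_k$ when the terminal index is $k-2$ or $k-3$). Matching the two descriptions yields the displayed equality, and hence $(\ast)$.

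The main obstacle is the bookkeeping at the boundary near index $k$: one must verify that the admissible terminal indices are exactly $\{k-3,k-2,k-1\}$ on both sides, and that in the $\mathcal{M}(L_n)$ case the extension can always be completed to a legal endpoint $n-1$ or $n$ without overshooting $n$. This rests only on the elementary fact that steps of size $2$ and $3$ reach every integer at least $2$ larger than the starting value, together with the inequalities furnished by $k<n$; the degenerate case $k=3$, where the terminal value $k-3=0$ simply drops out, is covered by the same description and should be recorded separately.
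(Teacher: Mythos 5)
Your proof is correct and follows essentially the same route as the paper's: both reduce the statement to the plain vertex-cover equivalence and then relate facets of $\mathcal{M}(L_k)$ and $\mathcal{M}(L_n)$ by truncating at the boundary (using that the largest index below the cutoff lies within $3$ of it, by Remark \ref{remark1}) and by extending facets across it. Your packaging as an equality of the two sets of ``low parts,'' together with the explicit justification that minimality transfers, is a slightly more systematic write-up of the same argument the paper gives in two separate directions.
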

\begin{proof}
	It is enough to show that $C$ is a  vertex cover of $\mathcal{M}(L_k)$ if and only if $C$ is a  vertex cover of $\mathcal{M}(L_n)$.
	
	$(\Rightarrow)$ Let $F=\{x_{i_1},\ldots,x_{i_j},x_{i_{j+1}},\ldots,x_{i_\ell}\}\in \text{Facets}\,(\mathcal{M}(L_n))$
	with $1\leq i_1< \cdots<i_j\leq k< i_{j+1} < \cdots< x_{i_\ell}\leq n$. then $i_{j+1}-i_j\in \{2,3\}$ by Remark \ref{remark1}. It follows that
	$i_j\in \{k-2,k-1,k\}$. Take $F'=\{x_{i_t}\mid 1\leq t\leq j\}$.
	
	Claim: $F'\cap C\neq \emptyset$. Thus $F\cap C\neq \emptyset$, as desired.
	
	Indeed, if $i_j\in \{k-1,k\}$, then $F'\in \text{Facets}\,(\mathcal{M}(L_k))$. This implies that $F'\cap C\neq \emptyset$
	because $C$ is a  vertex cover of $\mathcal{M}(L_k)$. If $i_j=k-2$, then $F'\cup \{x_k\}\in \text{Facets}\,(\mathcal{M}(L_k))$. Thus $F'\cap C\neq \emptyset$
	since $x_k\notin C$ and  $C$ is a  vertex cover of $\mathcal{M}(L_k)$.
	
	$(\Leftarrow)$ Let $F\in \text{Facets}\,(\mathcal{M}(L_k))$, the $F$ can always be extended to a facet $F'$ in $\mathcal{M}(L_n)$. Thus $F'\cap C\neq \emptyset$
	since $C$ is a  vertex cover of $\mathcal{M}(L_n)$. By hypothesis that $C\subseteq \{x_1,\ldots,x_{k-1}\}$, we obtain that if  $x_j\in C\cap F'$ then $x_j\in F\cap C$, as desired.
\end{proof}

\medskip
By direct calculation, we obtain
\begin{Lemma}\label{decompose1}
	Let  $L_n$  be a line graph with edge set  $\{x_i\mid i\in [n]\}$ where $1\leq n\leq 5$.
	Then $P$ is a minimal prime over $\F(L_n)$ if and only if $P$ satisfies the following conditions:
	\begin{itemize}
		\item[(1)] If $n=1$, then $P=(x_1)$;
		\item[(2)] If $n=2$, then $P=(x_1,x_2)$;
		\item[(3)] If $n=3$, then $P=(x_1,x_2)$ or $P=(x_2,x_3)$;
		\item[(4)] If $n=4$, then $P=(x_1,x_2)$ or $P=(x_3,x_4)$ or $P=(x_1,x_4)$;
		\item[(5)] If $n=5$, then $P=(x_1,x_2)$ or $P=(x_4,x_5)$ or $P=(x_2,x_3,x_4)$.
	\end{itemize}
\end{Lemma}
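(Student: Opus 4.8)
The plan is to reduce the statement to a finite combinatorial enumeration via Lemma~\ref{prime}: a prime $P=(x_{i_1},\ldots,x_{i_s})$ is minimal over $\F(L_n)$ precisely when $\{x_{i_1},\ldots,x_{i_s}\}$ is a minimal vertex cover of $\mathcal{M}(L_n)$. So for each $n\in\{1,2,3,4,5\}$ I would first write down $\text{Facets}\,(\mathcal{M}(L_n))$ explicitly and then list the minimal subsets of $\{x_1,\ldots,x_n\}$ that meet every facet, finally translating those covers back into primes.

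The facets come directly from Remark~\ref{remark1}. For $n=1,2,3$ they are read off from parts (1)--(3): namely $\{x_1\}$; then $\{x_1\},\{x_2\}$; then $\{x_1,x_3\},\{x_2\}$. For $n=4,5$ I would apply the conditions in part (4) ($i_1\in\{1,2\}$, $i_k\in\{n-1,n\}$, and consecutive gaps in $\{2,3\}$), obtaining
$$\{x_1,x_3\},\ \{x_1,x_4\},\ \{x_2,x_4\}\qquad(n=4),$$
and
$$\{x_1,x_3,x_5\},\ \{x_1,x_4\},\ \{x_2,x_4\},\ \{x_2,x_5\}\qquad(n=5).$$

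With the facets in hand, each case is a short transversal computation. For $n\le 3$ the answer is forced: a cover must contain every singleton facet together with at least one vertex of each two-element facet, giving $(x_1)$ for $n=1$, $(x_1,x_2)$ for $n=2$, and the two primes $(x_1,x_2),(x_2,x_3)$ for $n=3$. For $n=4$ I would verify that each of the candidates $\{x_1,x_2\},\{x_3,x_4\},\{x_1,x_4\}$ meets all three facets and that no vertex can be dropped, so each is minimal; organizing the search by the facet $\{x_1,x_4\}$ (which forces $x_1\in C$ or $x_4\in C$) then shows these are the only minimal covers.

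The only case needing real care is $n=5$, because besides the two two-element covers $\{x_1,x_2\}$ and $\{x_4,x_5\}$ there is a three-element minimal cover $\{x_2,x_3,x_4\}$ that is easy to overlook. Here I would again split on the facet $\{x_1,x_4\}$. If $x_1\in C$ and $x_4\notin C$, the facets $\{x_2,x_4\}$ and $\{x_2,x_5\}$ force $x_2\in C$, giving $\{x_1,x_2\}$. If $x_4\in C$ and $x_1\notin C$, the remaining facets $\{x_1,x_3,x_5\}$ and $\{x_2,x_5\}$ must still be met; taking $x_5$ yields $\{x_4,x_5\}$, whereas taking $x_3$ for the first and $x_2$ for the second yields $\{x_2,x_3,x_4\}$. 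The other survivors, such as $\{x_3,x_4,x_5\}$ or $\{x_2,x_4,x_5\}$, are discarded as non-minimal since each properly contains the cover $\{x_4,x_5\}$. Collecting the minimal covers and passing through Lemma~\ref{prime} produces exactly the primes in (1)--(5). The main (modest) obstacle throughout is bookkeeping: one must confirm that the enumeration for $n=5$ is genuinely exhaustive and that \emph{minimality}, and not merely the covering property, has been checked in every case.
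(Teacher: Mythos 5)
Your proposal is correct and matches the paper's approach: the paper justifies this lemma only by the phrase ``by direct calculation,'' and the calculation it intends is exactly the one you carry out, namely listing $\text{Facets}\,(\mathcal{M}(L_n))$ via Remark~\ref{remark1}, enumerating the minimal vertex covers, and translating them into minimal primes through Lemma~\ref{prime}. Your facet lists and cover enumerations for $n=1,\dots,5$ check out, including the three-element cover $\{x_2,x_3,x_4\}$ for $n=5$.
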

Let $\Omega_n$ be a collection of all minimal vertex covers of $\mathcal{M}(L_n)$ with  $n\in [5]$.
Now, we assume that $n\geq 6$ is  an integer. We write $n$ as  $n=3p+d$, where $p\geq 2$ and $0\leq d\leq 2$. Set
\begin{eqnarray*}
	A_1&=&\{x_1,x_2\}, \ \ \ A'_1=\{x_{n-1},x_n\};\\
	A_i&=&\{x_{i},x_{i+1},x_{i+2}\},\ \text{ for\ }2\leq i\leq n-3;\\
	B_j&=&\{x_{3\ell-2}\mid 1\leq \ell\leq j+1\}\cup\{x_{3j+2}\},\ \text{ for\ }1\leq j\leq p-1;\\
	B'_j&=&\{x_{n-3(\ell-1)}\mid 1\leq \ell\leq j+1\}\cup\{x_{n-3j-1}\},\ \text{ for\ }1\leq j\leq p-1;\\
	D&=&\{x_{3j+1}\mid 0\leq j\leq p\}, \text{\ if\ } n=3p+1.
\end{eqnarray*}
$$
\Omega_{n}=\left
\{\begin{array}{l@{\  \ }l}
	\{A_1,A'_1,A_2,A_3,B_1,B'_1\}& \text{if\ } n=6,\\
	\{A_1,A'_1,A_2,A_3,A_4,B_1,B'_1,D\}& \text{if\ } n=7,
\end{array}\right.
$$
When $n\geq 8$.
If   $n=3p$ or $3p+1$, we set
$C_{k\ell}=\{x_{k},x_{k+3\ell+2}\}\cup\{x_{k+3j-2}\mid 1\leq j\leq \ell+1\}$ for $k\in [n-3(\ell+1)]\setminus [1],\ell\in [p-2]$.
If $n=3p+2$, we set $C_{k\ell}=\{x_{k},x_{k+3\ell+2}\}\cup\{x_{k+3j-2}\mid 1\leq j\leq \ell+1\}$, for $k\in [n-3(\ell+1)]\setminus [1], \ell\in [p-1]$.
Put
$$\Omega_n=\left
\{\begin{array}{l@{\  \ }l}
	\left\{A_i,A'_1,B_j,B'_j,C_{k\ell}\mid \begin{subarray}{l} \text{for\ }i\in [n-3],\, j\in [p-1],\\
		k\in [n-3(\ell+1)]\setminus [1]\text{\ and\ }\ell\in [p-2]\end{subarray}\right\}&\ \text{if  $n=3p$},\\
	\left\{A_i,A'_1,B_j,B'_j,C_{k\ell},D\mid \begin{subarray}{l} \text{for\ }i\in [n-3],\, j\in [p-1],\\
		k\in [n-3(\ell+1)]\setminus [1]\text{\ and\ }\ell\in [p-2] \end{subarray}\right\}&\ \text{if  $n=3p+1$},\\
	\left\{A_i,A'_1,B_j,B'_j,C_{k\ell}\mid \begin{subarray}{l} \text{for\ }i\in [n-3],\, j\in [p-1],\\
		k\in [n-3(\ell+1)]\setminus [1]\text{\ and\ }\ell\in [p-1] \end{subarray}\right\}&\ \text{if   $n=3p+2$}.
\end{array}\right.$$

\medskip
By Lemmas \ref{prime}, \ref{cover} and the definition of $\Omega_n$, we have
\begin{Remark}\label{remark3}
	Let $s,n$ be two positive integers with $n>s\geq 3$. Then
	$$\{C\in \Omega_n\mid C\subseteq \{x_i\,|\, i\in [s-1]\}\}\subseteq \Omega_{s}.$$
\end{Remark}

\begin{Theorem}\label{decompose2}
	Let $n$ be a positive integer,  $L_n$ a line graph with edge set  $\{x_i\mid i\in [n]\}$.
	Then $P$ is a minimal prime over $\F(L_n)$ if and only if  $P$ is  generated by   $C$ where  $C\in \Omega_n$.
\end{Theorem}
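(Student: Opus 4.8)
The plan is to combine Lemma \ref{prime}, which identifies the minimal primes of $\F(L_n)$ with the minimal vertex covers of $\mathcal{M}(L_n)$, with the explicit description of facets in Remark \ref{remark1}: every facet is a set of indices starting in $\{1,2\}$, ending in $\{n-1,n\}$, with consecutive gaps in $\{2,3\}$. Under this dictionary the theorem becomes the purely combinatorial assertion that the minimal vertex covers of $\mathcal{M}(L_n)$ are exactly the sets in $\Omega_n$, and I would prove the two inclusions separately by strong induction on $n$. The base cases $n\le 5$ are Lemma \ref{decompose1}, and $n=6,7$ are checked directly against the tabulated $\Omega_6,\Omega_7$.

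For the inclusion of $\Omega_n$ into the set of minimal vertex covers, I would first dispatch the blocks $A_i$ and $A'_1$ by a direct argument: since the gaps of a facet are at most $3$, no facet can ``jump over'' three consecutive edges, so every facet meets $\{x_i,x_{i+1},x_{i+2}\}$ (the boundary blocks use that a facet's first edge lies in $\{x_1,x_2\}$ and, if it is $x_1$, its second edge lies in $\{x_3,x_4\}$); minimality follows by exhibiting, for each generator, a facet meeting the block only there. The remaining families are localized: $B_j$ and the interior sets $C_{k\ell}$ lie inside an initial, respectively inner, window such as $\{x_1,\dots,x_{3j+2}\}$, so the stabilization Lemma \ref{cover} lets me replace $\mathcal{M}(L_n)$ by a bounded $\mathcal{M}(L_k)$ and conclude by induction; the right-hand families $B'_j$ and $D$, and the $C_{k\ell}$ reaching the right boundary, follow by the left--right symmetry of $L_n$.

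For the reverse inclusion I would take an arbitrary minimal vertex cover $C$ and localize it. If $C\subseteq\{x_1,\dots,x_{k-1}\}$ for some $k<n$, then Lemma \ref{cover} makes $C$ a minimal vertex cover of $\mathcal{M}(L_k)$, the induction hypothesis gives $C\in\Omega_k$, and Remark \ref{remark3} together with the fact that the left-localized families in $\Omega_k$ occur verbatim in $\Omega_n$ yields $C\in\Omega_n$; the symmetric statement handles covers supported near the right end. It then remains to treat covers $C$ that reach within distance two of both endpoints. Here I would use the facet recursion of Remark \ref{remark2}, splitting the facets into those through $x_1$ and those through $x_2$, to analyze $C\cap\{x_1,x_2\}$: the case $x_1,x_2\in C$ forces $C=A_1$ by minimality, while the mixed cases peel off $x_1$ or $x_2$ and reduce $C$ to a shorter cover of $\mathcal{M}(L'_{n-2})$ or $\mathcal{M}(L'_{n-3})$. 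Iterating this peeling from both ends forces the every-third pattern with the two adjacent ``caps'', which is precisely the shape of $C_{k\ell}$ (and of $D$ when $n=3p+1$).

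The main obstacle is this last, ``spanning'' case: showing that a minimal cover meeting both ends must have exactly the $C_{k\ell}$ (or $D$) structure, and that each such set is genuinely minimal. The difficulty is that the left and right recursions overlap in the interior variables, so the reduction does not split as a disjoint sum; the resolution is to track the forced positions greedily, using that every gap in the complementary set of edges must be short enough to block all maximal matchings, and to verify minimality by constructing, for each generator of $C_{k\ell}$, an explicit maximal matching meeting $C_{k\ell}$ only in that generator. The bookkeeping that the peeling produces no covers outside the listed families, and no repetitions, is where most of the care is needed.
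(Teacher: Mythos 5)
Your overall architecture coincides with the paper's: reduce to minimal vertex covers via Lemma \ref{prime}, induct on $n$ with base cases $n\le 7$, use the stabilization Lemma \ref{cover} to localize covers supported away from one end, and invoke the left--right symmetry $y_i=x_{n+1-i}$ for the mirror families. Your forward direction is sound (and your direct ``no facet jumps over three consecutive edges'' argument for the interior blocks $A_i$ is, if anything, cleaner than the paper's case split on $C\cap A_1$ and $C\cap A'_1$). The problem is the part you yourself flag as the main obstacle: the reverse inclusion for covers that reach both ends. There you offer only ``track the forced positions greedily,'' and that is exactly where the real work lies. The paper's device is to fix the boundary intersection $A'_1\cap C$ and, in the hard subcase $A'_1\cap C=\{x_n\}$, introduce the run-length parameter $q=\max\{\ell\mid x_{n-3s}\in C\ \text{for all}\ 0\le s\le \ell\}$; the case analysis on $n-3q$ then either exhibits $D$, $B'_s$ or $A_1$ inside $C$, or (when no $x_{n-3s-1}$ lies in $C$) proves that necessarily $n-3q\ge 6$ and that the truncation $C\setminus\{x_i\mid n-3q-2\le i\le n\}$ is a vertex cover of $\mathcal{M}(L_{n-3q-3})$, to which induction applies. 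Both the inequality $n-3q\ge 6$ and the truncation claim require constructing explicit facets avoiding $C$, and neither is supplied by a generic greedy argument.

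A second, related gap: your proposed left-end peeling (splitting facets through $x_1$ versus through $x_2$) reduces a spanning minimal cover to a cover of $\mathcal{M}(L'_{n-2})$ or $\mathcal{M}(L'_{n-3})$, but the induction then only returns a shifted element $\Gamma$ of $\Omega_{n-2}$ or $\Omega_{n-3}$ contained in the truncated cover, and $\{x_1\}\cup\Gamma$ or $\{x_2\}\cup\Gamma$ is not automatically a member of $\Omega_n$ --- nor is it automatic that $\Gamma$ alone fails to cover $\mathcal{M}(L_n)$, which you would need before concluding anything from minimality. The paper confronts the analogous recombination explicitly in its case $A'_1\cap C=\{x_{n-1}\}$: when the element $\Gamma\in\Omega_{n-2}$ contains $x_{n-2}$, it enumerates the possible $\Gamma$ (namely $A''_1$, $B''_j$, $D'$) and matches each $\Gamma\cup\{x_{n-1}\}$ to a named member of $\Omega_n$ ($A_{n-3}$, $C_{n-3j-3,j}$, $B_{p-1}$). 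Without this enumeration your peeling could, a priori, produce covers outside the listed families. So the plan is the right one, but the spanning case needs the concrete mechanism (the parameter $q$, the facet constructions, and the recombination table) rather than a sketch.
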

\begin{proof}
	Since $\F(L_n)$ is a squarefree monomial ideal, we obtain by \cite[Corollary 1.3.6]{HH1} that
	$$\F(L_n)=\bigcap\limits_{P\in \text{Min}\,(\F(L_n))}P$$
	where $\text{Min}\,(\F(L_n))$ is the set of minimal prime ideals over $\F(L_n)$.
	Hence it is enough to show that $\Omega_n$ is a collection of all minimal vertex covers of $\mathcal{M}(L_n)$ by Lemma \ref{prime}.
	We apply induction on $n$. Cases $1\leq n\leq 5$ follow from Lemma \ref{decompose1}. Cases $n=6,7$ are obtained  by direct calculation.  Suppose that $n\geq 8$ and that the statement holds for any integer  $6\leq t\leq n-1$.
	Now we prove that the statement holds for $n$.

	First, we  will show that  $C$ is a minimal vertex cover of $\mathcal{M}(L_n)$ for any $C\in \Omega_n$. Let $T_n$ be the set of minimal vertex covers of $\mathcal{M}(L_n)$.
	
	We distinguish into the following four cases:
	
	(1) If $C=A_1$ or $C=A'_1$, then it is obvious  $C\in T_n$ from a fact
	$$\mathcal{G}(\F(L_n))=\mathcal{G}(x_1\F(L'_{n-2}))\cup\mathcal{G}(x_2\F(L'_{n-3}))=\mathcal{G}(x_n\F(L_{n-2}))\cup\mathcal{G}(x_{n-1}\F(L_{n-3})).$$
	
	(2) If $C\cap A'_1=\emptyset$. Then one can easily see that
	$$
	C\notin\left
	\{\begin{array}{l@{\  \ }l}
		\left\{A'_1,A_{n-3},B_{p-1},B'_j, C_{n-3(\ell+1),\ell}\mid \begin{subarray}{c} \text{for\ }\ j\in [p-1],\\
			\ell\in [p-2] \end{subarray}\right\}& \text{if\ } n=3p,\\
		\left\{A'_1,A_{n-3},D,B'_j, C_{n-3(\ell+1),\ell}\mid \begin{subarray}{c} \text{for\ }\ j\in [p-1],\\
			\ell\in [p-2] \end{subarray}\right\}& \text{if\ } n=3p+1,\\
		\left\{A'_1,A_{n-3},B'_j, C_{n-3(\ell+1),\ell}\mid  \text{for\ }j,\ell\in [p-1]\right\}& \text{if\ } n=3p+2.
	\end{array}\right.
	$$
	By the definition of $\Omega_{n-1}$, it is easy to see that
	$C\in \Omega_{n-1}$. Thus $C\in T_{n-1}$ by   inductive hypothesis. Since $C\subseteq  \{x_i\mid i\in [n-2]\}$, we get  $C\in T_{n}$ by Lemma \ref{cover}.
	
	(3) If $C\cap A_1=\emptyset$.  Let $y_i=x_{n+1-i}$ for any $i\in [n]$, then $L_n$ is not only a line graph with edges $y_1,\ldots,y_n$, but also satisfies
	$C\cap \{y_{n-1},y_n\}=\emptyset$. It follows that $C\in T_{n}$ from the proof of (2) above.
	
	(4) If $C\cap A_1\neq \emptyset$ and  $C\cap A'_1\neq \emptyset$. One can obtain  that
	$$
	C\in\left
	\{\begin{array}{l@{\  \ }l}
		\left\{B_{p-1},B'_{p-1}\right\}& \text{if\ } n=3p,\\
		\left\{D\right\}& \text{if\ } n=3p+1,\\
		\left\{ C_{2,p-1}\right\}& \text{if\ } n=3p+2.
	\end{array}\right.
	$$
	We divide into  the following three  cases:
	
	(i) When  $n=3p$. If $C=B_{p-1}$,  then
	$$C=\{x_{3\ell-2}\mid \ell\in [p]\}\cup\{x_{3p-1}\}=\{x_{3\ell-2}\mid \ell\in [p]\}\cup\{x_{n-1}\}.$$
	Let $C'=\{x_{3\ell-2}\mid \ell\in [p]\}$, then  $C=C'\sqcup \{x_{n-1}\}$ and $C'\in \Omega_{n-2}$ by the definition of $\Omega_{n-2}$. Hence  $C'\in T_{n-2}$ by inductive hypothesis.
	For any  $F\in \text{Facets}\,(\mathcal{M}(L_n))$,  it can be written as
	$ F_1\cup \{x_{n}\}$ or $F_2\cup \{x_{n-1}\}$,
	where  $F_1\in \text{Facets}\,(\mathcal{M}(L_{n-2}))$ and $F_2\in \text{Facets}\,(\mathcal{M}(L_{n-3}))$.
	It follows that $C\in T_n$.
	
	If $C=B'_{p-1}$. Let $y_i=x_{n+1-i}$ for each  $i$, then $L_n$ is also a line graph with   edges $y_1,\ldots,y_n$. In this case, one has
	$$C=\{y_{3\ell-2}\mid \ell\in [p]\}\cup\{y_{3p-1}\}=\{y_{3\ell-2}\mid \ell\in [p]\}\cup\{y_{n-1}\}.$$
	It follows that $C\in T_n$ from the above proof.
	
	(ii) If $n=3p+1$, then  $C=D=\{x_{3j-2}\mid j\in [p+1]\}$. Let $C''=\{x_{3j-2}\mid j\in [p]\}$, then  $C=C''\sqcup \{x_{n}\}$ and $C''\in \Omega_{n-3}$ by the definition of $\Omega_{n-3}$.
	By inductive hypothesis, we obtain  $C''\in T_{n-3}$. It follows that $C\in T_n$ from the proof of  (i).
	
	(iii) If   $n=3p+2$, then  $C=C_{2,p-1}=\{x_{2},x_{3p+1}\}\cup\{x_{3j}\mid j\in [p]\}=\{x_{2},x_{n-1}\}\cup\{x_{3j}\mid j\in [p]\}$.
	Let $C'''=\{x_{3j}\mid j\in [p]\}$. According to (ii), we obtain that $C'''$ is a minimal vertex cover of $\mathcal{M}(L''_{n-4})$, where $L''_{n-4}$ is a line graph with edge set $\{x_i\mid 3\leq i\leq n-2\}$.
	Note that
	\begin{eqnarray*}
		\mathcal{G}(\F(L_n))&=&\mathcal{G}(x_1\F(L'_{n-2}))\cup\mathcal{G}(x_2\F(L'_{n-3}))\\
		&=&\mathcal{G}(x_1x_n\F(L''_{n-4}))\cup\mathcal{G}(x_1x_{n-1}\F(L''_{n-5}))\cup\mathcal{G}(x_2\F(L'_{n-3})),
	\end{eqnarray*}
	thus for any $F\in \text{Facets}\,(\mathcal{M}(L_n))$, it can be written in one of the following three different forms:
	$$(a)\ \ F_1\cup \{x_1,x_{n}\},\ (b)\ \ F_2\cup \{x_1,x_{n-1}\},\ (c)\ \ F_3\cup \{x_2\},$$
	where $F_1\in \text{Facets}\,(\mathcal{M}(L''_{n-4}))$,
	$F_2\in \text{Facets}\,(\mathcal{M}(L''_{n-5}))$, $F_3\in \text{Facets}\,(\mathcal{M}(L'_{n-3}))$, $L''_{n-4}$, $L''_{n-5}$  and $L'_{n-3}$ are line graphs with $\{x_i\mid 3\leq i\leq n-2\}$, $\{x_i\mid 3\leq i\leq n-3\}$, and $\{x_i\mid 4\leq i\leq n\}$ as edge sets respectively. Hence $C\in T_n$.
	
	\medskip
	Next,  we will show that any vertex cover $C$ of $\mathcal{M}(L_n)$ must contain
	some element in $\Omega_{n}$. Thus $\Omega_{n}$ is  exactly a collection of all minimal vertex covers of $\mathcal{M}(L_n)$, as desired.
	
	We divide into the following two cases:

	(1) If  $A'_1\cap C=\emptyset$, then $C\subseteq \{x_i\mid i\in [n-2]\}$. It follows that $C$ is a vertex cover of $\mathcal{M}(L_{n-1})$ by Lemma \ref{cover}. By induction hypothesis, we obtain
	$C'\subseteq C$ for some $C'\in \Omega_{n-1}$. According to Remark \ref{remark3}, one has $C'\in \Omega_{n}$ because of $C'\subseteq \{x_i\mid i\in [n-2]\}$,  as
	desired.

	(2) If  $A'_1\cap C\neq\emptyset$, then $C$ satisfies one of the following three conditions:
	$$(i)\ A'_1\cap C=A'_1,\ \ (ii) \ A'_1\cap C=\{x_{n}\},\ \ (iii)\ A'_1\cap C=\{x_{n-1}\}. $$
	
	$(i)$ If $A'_1\cap C=A'_1$, obviously $C\supseteq A'_1$.
	
	$(ii)$ If $A'_1\cap C=\{x_{n}\}$. Set
	$$q=\max\{\ell\mid\, x_{n-3s}\in C \ \text{for any\ } 0\leq s\leq \ell\}.$$
	It follows that $\{x_{n-3s}\mid 0\leq s\leq q\}\subseteq C$.
	
	We consider the following three cases:
	
	(a) If $n-3q=1$, then   $q=p$ from  the notation of $n$. In this case,  $C\supseteq D$.
	
	(b) If $n-3q\geq 2$ and  there exists some $x_{n-3s-1}\in C$ with $s\in [q]$, then
	$$C\supseteq\{x_{n-3j}\mid 0\leq j\leq s\}\cup \{x_{n-3s-1}\}.$$
	Hence
	$$C\supseteq\left\{\begin{array}{l@{\  \ }l}
		A_1& \text{if\ } n=3q+2,\\
		B'_s& \text{otherwise}.
	\end{array}\right.
	$$
	
	(c)  If $n-3q\geq 2$ and  $x_{n-3s-1}\notin C$ for any $s\in [q]$, then $n-3q\geq 6$.
	Indeed,  we choose $F=\{x_{n+2-3j}\mid\, j\in [q+1]\}$, then $F\cap C=\emptyset$.
	If $n-3q=2$ or $3$, then $F\in \text{Facets}\,(\mathcal{M}(L_n))$,  contradicting with the assumption that $C$ is a vertex cover of $\mathcal{M}(L_n)$.
	If $n-3q=4$ or $5$, then $F\cup\{x_{n-3q-3}\}\in \text{Facets}\,(\mathcal{M}(L_n))$, and $x_{n-3q-3}\notin C$ by the choice of $q$, a contradiction again.
	
	Let  $C'=C\setminus \{x_{i}\mid n-3q-2\leq i\leq n\}$. Claim: $C'$ is a vertex cover of $\mathcal{M}(L_{n-3q-3})$.
	It follows that $C'\supseteq \Gamma$ for some $\Gamma\in \Omega_{n-3q-3}$ by  induction hypothesis. Note a fact that $x_{n-3q-3}\notin C$, which implies $\Gamma\subseteq \{x_i|i\in [n-3q-4]\}$. Hence $\Gamma\in \Omega_{n}$ by Lemma \ref{cover}.

	{\em The proof of Claim:}
	If the assertion does not hold, then  there exists some $F'\in \text{Facets}\,(\mathcal{M}(L_{n-3q-3}))$  such that $F'\cap C'=\emptyset$. According to the definition of
	matching complex, we get $\max\{i\mid x_i\in F'\}=n-3q-3, \text{or}\ n-3q-4$.
	Choose $\Lambda=F'\cup F=F'\cup\{x_{n+2-3j}\mid\, j\in [q+1]\}$. Then  $\Lambda\in \text{Facets}\,(\mathcal{M}(L_n))$, and $\Lambda\cap C=\emptyset$  by the supposition that $x_{n-3s-1}\notin C$ for any $s\in [q]$ and $F'\cap C'=\emptyset$.
	It contradicts with  the assumption that $C$ is a vertex cover of $\mathcal{M}(L_n)$.
	
	$(iii)$ If $A'_1\cap C=\{x_{n-1}\}$, then $C'$ is a vertex cover of $\mathcal{M}(L_{n-2})$  by
	Remark \ref{remark1}, where $C'=C\setminus\{x_{n-1}\}$. Hence  $C'\supseteq \Gamma$ for some $\Gamma\in \Omega_{n-2}$ by  induction hypothesis.
	We divide into the following two cases:
	
	(a)If $x_{n-2}\notin \Gamma$, then, by  Lemma \ref{cover}, we have  $\Gamma\in \Omega_{n}$ because of  $\Gamma\subseteq \{x_i\mid\, i\in [n-3]\}$, as desired.
	
	(b) If $x_{n-2}\in \Gamma$, then, from the definition of $\Omega_{n-2}$ and a fact $\Gamma\in \Omega_{n-2}$, it can be obtained that
	$$
	\Gamma\in\left
	\{\begin{array}{l@{\  \ }l}
		\left\{A''_1, B''_{j}, D'\mid\, j\in[p'-1]\right\}& \text{if\ } n=3p\\
		\left\{A''_1,B''_{j}\mid\, j\in[p'-1]\right\}& \text{if\ } n=3p+1\ \text{or}\ 3p+2,
	\end{array}\right.
	$$
	where $A''_1=\{x_{n-3}, x_{n-2}\}$,  $B''_j=\{x_{n+1-3\ell}\mid  \ell\in [j+1]\}\cup\{x_{n-3(j+1)}\}$ for any $j\in [p'-1]$ with
	$p'=\left
	\{\begin{array}{l@{\  \ }l}
		p-1& \text{if\ } n=3p\ \text{or}\ 3p+1\\
		p& \text{if\ } n=3p+2
	\end{array}\right.$,
	and $D'=\{x_{3\ell-2}\mid \ell\in  [p-1]\}$ when $n=3p$.
	
	If $\Gamma=A''_1$, then $\Gamma\cup \{x_{n-1}\}=A_{n-3}\in \Omega_{n}$; If $\Gamma=B''_{j}$ for some $j\in [p'-1]$, then $\Gamma\cup \{x_{n-1}\}=C_{n-3j-3,\,j}\in\Omega_{n}$; If $n=3p$ and $\Gamma=D'$, thus $\Gamma\cup \{x_{n-1}\}=B_{p-1}\in \Omega_{n}$.
	
	In all  cases it follows that $\Gamma\cup \{x_{n-1}\}\in \Omega_{n}$, as desired.
	This finishes the proof.
\end{proof}

\medskip
Given an ideal $I\subset S$, we set
$$\text{bight}\,(I)=\sup\{\height\,(P)\mid  P \text{ is a   minimal prime  ideal  of } S \text{ over }  I\}.$$

\medskip
As a consequence of the above theorem, we have
\begin{Corollary}\label{ht}
	Let $n$ be a positive integer,  $L_n$ a line graph with edge set $\{x_i\mid i\in [n]\}$. Then
	\begin{enumerate}
		\item[(a)] $\height\,(\F(L_2))=1$ and $\height\,(\F(L_n))=2$ for any  $n\geq3$;
		\item[(b)] $\text{{\em bight}}\, (\F(L_n))=\left\{\begin{array}{ll}
			p+1\ \ &\text{if}\ \ n=3p\text{\ or\ } n=3p+1,\\
			p+2\ \ &\text{if}\ \ n=3p+2.
		\end{array}\right.$
	\end{enumerate}
\end{Corollary}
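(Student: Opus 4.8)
The plan is to convert both invariants into extremal cardinalities of the vertex covers produced by Theorem \ref{decompose2}. Since $\F(L_n)$ is squarefree, its minimal primes are precisely the ideals $P_C=(x_i\mid x_i\in C)$ with $C\in\Omega_n$, by Lemma \ref{prime} together with Theorem \ref{decompose2}, and $\height\,(P_C)=|C|$. Hence $\height\,(\F(L_n))=\min\{|C|\mid C\in\Omega_n\}$ and $\text{bight}\,(\F(L_n))=\max\{|C|\mid C\in\Omega_n\}$, so the entire corollary reduces to reading off the smallest and largest sizes among the explicitly listed families in $\Omega_n$.

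For part (a) I would simply tabulate the cardinalities of the generating families: $|A_1|=|A'_1|=2$, $|A_i|=3$ for $2\le i\le n-3$, while each of $B_j$, $B'_j$, $C_{k\ell}$, $D$ has at least three elements. For $n\ge 6$ the cover $A_1\in\Omega_n$ already realizes size $2$ and no cover has size $1$, so the minimum is $2$ and $\height\,(\F(L_n))=2$; the remaining small values are read directly from Lemma \ref{decompose1}.

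For part (b) I would write $n=3p+d$ with $0\le d\le 2$ and bound each family in turn. One has $|B_j|=|B'_j|=j+2\le p+1$, and $|C_{k\ell}|=\ell+3$, where the admissible range of $\ell$ is $[p-2]$ when $d\in\{0,1\}$ and $[p-1]$ when $d=2$; thus $|C_{k\ell}|\le p+1$ in the first case and $\le p+2$ in the second. When $d=1$ one also has $|D|=p+1$. Because $p\ge 2$ for $n\ge 6$, the constant bound $|A_i|\le 3$ never exceeds these, so the maximum is $p+1$ for $n=3p$ and $n=3p+1$ and is $p+2$ for $n=3p+2$. I would then exhibit covers attaining these values, namely $B_{p-1}$ (or $D$) of size $p+1$ and $C_{2,p-1}=\{x_2,x_{n-1}\}\cup\{x_{3j}\mid j\in[p]\}$ of size $p+2$, and verify the small cases $n\le 7$ against Lemma \ref{decompose1} and the explicit $\Omega_6,\Omega_7$.

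The only genuine bookkeeping, and the step most prone to error, is the cardinality of $C_{k\ell}$: one must confirm that the two boundary vertices $x_k$ and $x_{k+3\ell+2}$ are distinct from the arithmetic-progression block $\{x_{k+3j-2}\mid 1\le j\le\ell+1\}$ (whose indices run from $k+1$ to $k+3\ell+1$), so that $|C_{k\ell}|=\ell+3$ exactly, and then to track the $d$-dependent upper limit on $\ell$ that produces the $p+1$ versus $p+2$ dichotomy. Everything else is a routine comparison of these linear-in-$p$ sizes against the constant $3$.
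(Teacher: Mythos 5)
Your proof is correct and takes exactly the route the paper intends: the corollary is stated there as an immediate consequence of Theorem \ref{decompose2} (via Lemma \ref{prime}), i.e., by reading off the minimal and maximal cardinalities of the covers in $\Omega_n$, and your counts $|A_1|=|A'_1|=2$, $|B_j|=|B'_j|=j+2$, $|C_{k\ell}|=\ell+3$ with the $d$-dependent range of $\ell$, and $|D|=p+1$ are all accurate, with $B_{p-1}$ (resp.\ $D$, resp.\ $C_{2,p-1}$) realizing the maximum. One small remark: your method actually yields $\height\,(\F(L_2))=2$, since by Lemma \ref{decompose1} the unique minimal prime over $\F(L_2)=(x_1,x_2)$ is $(x_1,x_2)$ itself, so the value $1$ in part (a) of the statement appears to be a misprint (it is $\F(L_1)$ that has height $1$).
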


	\medskip
	
	\section{Projective dimension and regularity  of  facet ideals of  the matching complexes of   line graphs}
	
	In this section, we will provide some formulas for  the  projective dimension and regularity of  the facet ideal $\F(L_{n})$ of the matching complex of a line graph $L_n$ with n edges.

	\medskip
	\begin{Lemma}\label{line1}
		Let $n\geq 5$ be an integer, $L_n$ a line graph with edge set $\{x_i\mid i\in [n]\}$. Let $\F(L_n)$ be the facet ideal
		of the  simplicial complex $\mathcal{M}(L_n)$. Then
		$$\F(L_n)=J_n+K_n\ \text{and \ }J_n\cap K_n=x_1x_2P_{n},$$
		where $J_n=x_1\F(L'_{n-2})$, $K_n=x_2\F(L'_{n-3})$,  $P_{n}=x_4\F(L'_{n-5})+x_3x_5\F(L'_{n-6})$ and
		we stipulate $\F(L'_{-1})=\F(L'_{0})=S$.
	\end{Lemma}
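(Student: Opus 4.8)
The plan is to establish the two assertions in turn. The identity $\F(L_n)=J_n+K_n$ is immediate from Remark \ref{remark2}: there $\mathcal{G}(\F(L_n))=\mathcal{G}(x_1\F(L'_{n-2}))\cup\mathcal{G}(x_2\F(L'_{n-3}))$, and an ideal generated by a union of two generating sets is the sum of the two ideals, so $\F(L_n)=x_1\F(L'_{n-2})+x_2\F(L'_{n-3})=J_n+K_n$. For the intersection I would first peel off the factor $x_1x_2$. Since $L'_{n-2}$ has edge set $\{x_i\mid 3\le i\le n\}$ and $L'_{n-3}$ has edge set $\{x_i\mid 4\le i\le n\}$, neither $x_1$ nor $x_2$ belongs to $\supp(\F(L'_{n-2}))\cup\supp(\F(L'_{n-3}))$; hence for any $u\in\mathcal{G}(\F(L'_{n-2}))$ and $v\in\mathcal{G}(\F(L'_{n-3}))$ the variables $x_1,x_2$ are coprime to both $u$ and $v$, and $\lcm(x_1u,x_2v)=x_1x_2\lcm(u,v)$. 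As the intersection of two monomial ideals is generated by the pairwise least common multiples of their generators, this yields $J_n\cap K_n=x_1x_2\,(\F(L'_{n-2})\cap\F(L'_{n-3}))$, so the lemma reduces to the identity $\F(L'_{n-2})\cap\F(L'_{n-3})=P_n$.

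To study this intersection I would apply Remark \ref{remark2} to the two shifted line graphs, reading off their two smallest edges: $\F(L'_{n-2})=x_3\F(L'_{n-4})+x_4\F(L'_{n-5})$ and $\F(L'_{n-3})=x_4\F(L'_{n-5})+x_5\F(L'_{n-6})$, with the stipulation $\F(L'_{-1})=\F(L'_0)=S$. The inclusion $P_n\subseteq\F(L'_{n-2})\cap\F(L'_{n-3})$ is then straightforward: the summand $x_4\F(L'_{n-5})$ lies in both ideals, while $x_3x_5\F(L'_{n-6})\subseteq x_5\F(L'_{n-6})\subseteq\F(L'_{n-3})$ and, using $x_5\F(L'_{n-6})\subseteq\F(L'_{n-4})$ (again from Remark \ref{remark2}, the boundary case being trivial), also $x_3x_5\F(L'_{n-6})\subseteq x_3\F(L'_{n-4})\subseteq\F(L'_{n-2})$.

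The reverse inclusion is the only step requiring genuine care, and I would handle it through the pairwise-lcm description once more. A generator of $\F(L'_{n-2})$ has the form $x_3a$ with $a\in\mathcal{G}(\F(L'_{n-4}))$ or $x_4b$ with $b\in\mathcal{G}(\F(L'_{n-5}))$, and a generator of $\F(L'_{n-3})$ has the form $x_4b'$ with $b'\in\mathcal{G}(\F(L'_{n-5}))$ or $x_5c$ with $c\in\mathcal{G}(\F(L'_{n-6}))$, giving four types of products. The support bounds $\supp(\F(L'_{n-5}))\subseteq\{x_6,\dots,x_n\}$ and $\supp(\F(L'_{n-6}))\subseteq\{x_7,\dots,x_n\}$ show that $x_4$ is coprime to $b,b'$ and that $x_3,x_5$ are coprime to $c$. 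Consequently each of the three products involving a factor of the form $x_4b$ is a multiple of the generator $x_4b$ (or $x_4b'$) of $x_4\F(L'_{n-5})$, hence lies in $x_4\F(L'_{n-5})$; and the remaining product $\lcm(x_3a,x_5c)$ is a multiple of $x_3x_5c$, a generator of $x_3x_5\F(L'_{n-6})$, hence lies there. Thus every pairwise lcm lands in $P_n$, proving $\F(L'_{n-2})\cap\F(L'_{n-3})\subseteq P_n$. The main obstacle is exactly this last type $\lcm(x_3a,x_5c)$: since $x_5$ may already divide $a$, one cannot simply factor the lcm as $x_3x_5\lcm(a,c)$, but the coprimality of $x_3,x_5$ with $c$ still forces $x_3x_5c$ to divide the lcm, which is all that is needed.
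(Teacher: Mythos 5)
Your proof is correct, and for the key identity $\F(L'_{n-2})\cap\F(L'_{n-3})=P_n$ it takes a genuinely different (and arguably cleaner) route than the paper. The paper, after the same reduction $J_n\cap K_n=x_1x_2\bigl(\F(L'_{n-2})\cap\F(L'_{n-3})\bigr)$, computes the intersection by a chain of ideal-arithmetic equalities: it distributes $\cap$ over $+$ (valid for monomial ideals), absorbs the redundant terms, applies the recursion of Remark \ref{remark2} one level deeper to write $\F(L'_{n-4})=x_5\F(L'_{n-6})+x_6\F(L'_{n-7})$, and kills the leftover term $x_3x_5x_6(\F(L'_{n-7})\cap\F(L'_{n-6}))$ by absorption into $x_3x_5\F(L'_{n-6})$; this derivation is run only for $n\geq 7$, with $n=5,6$ checked by direct calculation. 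You instead prove the two inclusions separately, with the nontrivial direction handled by the pairwise-$\lcm$ description of intersections of monomial ideals and a four-case analysis of generator types; your observation that $\gcd(x_3,x_5c)=1$ forces $x_3x_5c\mid\lcm(x_3a,x_5c)$ even though $x_5$ may divide $a$ is exactly the point where a naive factorization would fail, and you handle it correctly. Your argument also works uniformly down to $n=5$ thanks to the stipulation $\F(L'_{-1})=\F(L'_0)=S$, avoiding the separate base computations; what it costs is the explicit case analysis that the paper's equational manipulation packages into two applications of distributivity.
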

	\begin{proof}
		It's obvious that $\F(L_n)=J_n+K_n$  by Remark \ref{remark2}. By direct calculation, we  obtain
		$J_5\cap K_5=x_1x_2(x_3x_5,x_4)=x_1x_2P_5$ and $J_6\cap K_6=x_1x_2(x_3x_5,x_4x_6)=x_1x_2P_6$, as desired.

		Assume that  $n\geq 7$. By repeated using Remark \ref{remark2}, for all $1\leq i\leq 4$, we have  $$\F(L'_{n-i})=x_{i+1}\F(L'_{n-i-2})+x_{i+2}\F(L'_{n-i-3}),$$
		where $\{x_{i+1}\}\cap \text{supp}\,(\F(L'_{n-i-3}))=\{x_{i+2}\}\cap \text{supp}\,(\F(L'_{n-i-2}))=\emptyset$.
		One has
		$$J_n\cap K_n=x_1x_2(\F(L'_{n-2})\cap \F(L'_{n-3})).$$
		By comparing the expressions of $J_n\cap K_n$, we get $P_n=\F(L'_{n-2})\cap \F(L'_{n-3})$. It follows that
		\begin{eqnarray*}
			P_n&=&(x_3\F(L'_{n-4})+x_4\F(L'_{n-5}))\cap (x_4\F(L'_{n-5})+x_5\F(L'_{n-6}))\\
			&=&x_4\F(L'_{n-5})+(x_3\F(L'_{n-4}))\cap (x_5\F(L'_{n-6}))+(x_4\F(L'_{n-5}))\cap (x_5\F(L'_{n-6}))\\
			&=&x_4\F(L'_{n-5})+(x_3\F(L'_{n-4}))\cap (x_5\F(L'_{n-6}))\\
			&=&x_4\F(L'_{n-5})+[x_3(x_5\F(L'_{n-6})+x_6\F(L'_{n-7})]\cap(x_5\F(L'_{n-6}))\\
			&=&x_4\F(L'_{n-5})+x_3x_5\F(L'_{n-6})+(x_3x_6\F(L'_{n-7}))\cap(x_5\F(L'_{n-6}))\\
			&=&x_4\F(L'_{n-5})+x_3x_5\F(L'_{n-6})+x_3x_5x_6(\F(L'_{n-7})\cap\F(L'_{n-6}))\\
			&=&x_4\F(L'_{n-5})+x_3x_5\F(L'_{n-6}).
		\end{eqnarray*}
		where the penultimate equality holds because of $\{x_3,x_6\}\cap \text{supp}\,(\F(L'_{n-6}))=\{x_5\}\cap \text{supp}\,(\F(L'_{n-7}))=\emptyset$.
	\end{proof}
	
	\begin{Theorem}\label{line2}
		Let $n$ be a positive integer, $L_n$ a line graph with edge set $\{x_i\mid i\in [n]\}$. Let $\F(L_n)$ be the facet ideal
		of the  simplicial complex $\mathcal{M}(L_n)$.
		Then
		\[
		\pd\,(\F(L_n))\leq\left\{\begin{array}{ll}
			p\ \ &\text{if}\ n=3p \text{\ or\ } n=3p+1,\\
			p+1\ \ &\text{if}\ n=3p+2,
		\end{array}\right.
		\]
		and
		\[
		\reg\,(\F(L_n))=\left\{\begin{array}{ll}
			1\ \ &\text{if}\  \  n=1,\\
			2p\ \ &\text{if}\ \ n=3p, \text{\ or\ } n=3p+1 \text{\ with \ }p>1,\\
			2p+1\ \ &\text{if}\ \ n=3p+2.
		\end{array}\right.
		\]
		Moreover, if $n\geq 5$, then
		$$\pd(P_n)\leq  \left\{\begin{array}{ll}
			p-1\ \ &\text{if}\ n=3p \text{\ or\ } n=3p+1,\\
			p\ \ &\text{if}\ \ n=3p+2,
		\end{array}\right.$$
		and
		$$\reg(P_n)=\left\{\begin{array}{ll}
			2p-1\ \ &\text{if}\ n=3p \text{\ or\ } n=3p+1,\\
			2p\ \ &\text{if}\ n=3p+2,
		\end{array}\right.
		$$
		where  $P_n$ as defined  in Lemma \ref{line1}.
	\end{Theorem}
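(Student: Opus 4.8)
The plan is to prove all four formulas by a single strong induction on $n$, establishing the statements for $P_n$ and for $\F(L_n)$ simultaneously; at each stage one first treats $P_n$ and then $\F(L_n)$, since the recursion for $P_n$ involves only data of strictly smaller index, whereas $\F(L_n)$ feeds on $P_n$ together with $\F(L_{n-2})$ and $\F(L_{n-3})$. The two engines are the short exact sequences attached to the decompositions of Lemma \ref{line1}. For $\F(L_n)=J_n+K_n$ with $J_n\cap K_n=x_1x_2P_n$ one uses
$$0\To J_n\cap K_n\To J_n\oplus K_n\To \F(L_n)\To 0,$$
and for $P_n=U_n+V_n$ with $U_n=x_4\F(L'_{n-5})$ and $V_n=x_3x_5\F(L'_{n-6})$ one uses the analogous sequence
$$0\To U_n\cap V_n\To U_n\oplus V_n\To P_n\To 0.$$

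The first reduction rewrites every term through Lemma \ref{supp}. Since $x_1\notin\supp(\F(L'_{n-2}))$ and $x_2\notin\supp(\F(L'_{n-3}))$, and since $L'_{n-2}\cong L_{n-2}$, $L'_{n-3}\cong L_{n-3}$ under the shift $x_{i+2}\mapsto x_i$ (which preserves $\pd$ and $\reg$), one gets $\pd(J_n)=\pd(\F(L_{n-2}))$, $\reg(J_n)=\reg(\F(L_{n-2}))+1$, and likewise for $K_n$ with index $n-3$; from $J_n\cap K_n=x_1x_2P_n$ one gets $\pd(J_n\cap K_n)=\pd(P_n)$ and $\reg(J_n\cap K_n)=\reg(P_n)+2$. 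The same bookkeeping applies to the $P_n$-sequence once one checks the key identity
$$U_n\cap V_n=x_3x_4x_5\,(\F(L'_{n-5})\cap \F(L'_{n-6}))=x_3x_4x_5\,P_{n-3};$$
the first equality holds because $x_3,x_4,x_5$ meet the support of neither factor, and the second is exactly Lemma \ref{line1} applied to the line graph on the edges $x_4,\dots,x_n$, which is isomorphic to $L_{n-3}$. Hence $\pd(U_n)=\pd(\F(L_{n-5}))$, $\reg(U_n)=\reg(\F(L_{n-5}))+1$, $\pd(V_n)=\pd(\F(L_{n-6}))$, $\reg(V_n)=\reg(\F(L_{n-6}))+2$, $\pd(U_n\cap V_n)=\pd(P_{n-3})$, and $\reg(U_n\cap V_n)=\reg(P_{n-3})+3$.

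With these values in hand, the projective-dimension bounds follow from Lemma \ref{exact}(4) together with $\pd(I\oplus J)=\max\{\pd I,\pd J\}$: in each residue class of $n$ modulo $3$ the inductive values of $\pd(\F(L_{n-2}))$, $\pd(\F(L_{n-3}))$ and $\pd(P_n)$ feed straight into $\pd(\F(L_n))\le\max\{\pd(J_n\cap K_n)+1,\pd(J_n\oplus K_n)\}$ and yield the stated bound, and the $P_n$ bound is obtained the same way from its sequence. For the regularity one applies Lemma \ref{exact}(2) with $\reg(I\oplus J)=\max\{\reg I,\reg J\}$; here the equality clause is needed, and it holds because in every case the intersection term dominates. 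A residue-by-residue computation shows $\reg(J_n\cap K_n)=\reg(P_n)+2$ is strictly larger than $\reg(J_n\oplus K_n)$, forcing $\reg(\F(L_n))=\reg(J_n\cap K_n)-1$; the analogous strict inequality $\reg(U_n\cap V_n)>\reg(U_n\oplus V_n)$ pins down $\reg(P_n)$. Carrying out the three cases $n=3p,3p+1,3p+2$ (remembering that $n-2$ and $n-3$ change the residue, e.g. for $n=3p$ one has $n-2\equiv 1$ and $n-3\equiv 0\pmod 3$) reproduces the displayed formulas.

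The base cases $n\le 4$ for $\F(L_n)$ and $n\in\{5,6,7\}$ for $P_n$ are handled by direct computation; each such ideal splits as a sum of two ideals in disjoint variables (for instance $\F(L_3)=(x_1x_3)+(x_2)$ and $P_5=(x_4)+(x_3x_5)$), so Lemma \ref{sum} delivers the invariants at once. The main obstacle is the middle step: correctly identifying the two intersection ideals as shifted copies of $P_n$ and $P_{n-3}$ — this is where Lemma \ref{line1} and the relabeling isomorphism do the real work — and then verifying, uniformly across the three residues, that the regularity of the intersection term strictly exceeds that of the direct summand, so that the equality branch of Lemma \ref{exact}(2) genuinely applies. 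The projective-dimension half is comparatively painless, since only the inequality in Lemma \ref{exact}(4) is required.
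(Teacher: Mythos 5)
Your proposal is correct and follows essentially the same route as the paper: the same two decompositions from Lemma \ref{line1}, the same identifications $J_n\cap K_n=x_1x_2P_n$ and $M_n\cap N_n=x_3x_4x_5P_{n-3}$ (up to relabeling), the same simultaneous induction using Lemma \ref{exact} together with Lemma \ref{supp}, and the same verification that the intersection term strictly dominates in regularity so that the equality branch applies. The only quibble is your parenthetical claim that every base case splits into a sum of ideals in disjoint variables: this fails for $\F(L_4)=(x_1x_3,x_1x_4,x_2x_4)$ and for $P_7=(x_3x_5x_7,x_4x_6,x_4x_7)$, which the paper handles respectively by citing an earlier result and by the exact sequence $0\to (P_7:x_4)(-1)\to P_7\to (P_7,x_4)\to 0$ — but since these are finitely many small ideals settled by direct computation, this is not a genuine gap.
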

	\begin{proof}
		Cases $1\leq n\leq 3$ are clear by Lemma \ref{sum} and case $n=4$ follows from \cite[Theroem 3.7]{Zhu}. Assume that  $n\geq 5$.
		Consider the following short exact sequence
		$$0\longrightarrow J_n\cap K_n \longrightarrow J_n\oplus K_n\longrightarrow \F(L_n)\longrightarrow 0,\eqno(\dag)$$
		where $J_n$ and $K_n$,  as defined in Lemma \ref{line1}, satisfy $J_n\cap K_n=x_1x_2P_{n}$.
		Applying Lemma \ref{exact} (2), (4) and Lemma \ref{supp} (2), (3) to the short exact sequence above, we obtain
		\begin{eqnarray*}
			\pd\,(\F(L_n))&\leq& \max\,\{\pd\,(J_n\cap K_n)+1, \pd\,(J_n), \pd\,(K_n)\}\\
			&=&\max\,\{\pd\,(P_{n})+1, \pd\,(\F(L'_{n-2})), \pd\,(\F(L'_{n-3}))\},
		\end{eqnarray*}
		and
		\begin{eqnarray*}
			\reg\,(\F(L_n))&\leq& \max\,\{\reg\,(J_n\cap K_n)-1, \reg\,(J_n), \reg\,(K_n)\}\\
			&=&\max\,\{2+\reg\,(P_{n})-1, 1+\reg\,(\F(L'_{n-2})), 1+\reg\,(\F(L'_{n-3}))\},\\
			&=&1+\max\,\{\reg\,(P_{n}), \reg\,(\F(L'_{n-2})), \reg\,(\F(L'_{n-3}))\},
		\end{eqnarray*}
		where equality holds if $\reg\,(P_{n})\neq\max\,\{ \reg\,(\F(L'_{n-2})), \reg\,(\F(L'_{n-3}))\}-1$.
		
		\medskip
		Since $L'_{n-k}$ is also a line graph with  edge set  $\{x_i\mid i\in [n]\setminus [k]\}$ for any positive integers  $n>k$, we have
		$\pd\,(\F(L'_{n-k}))=\pd\,(\F(L_{n-k}))$ and $\reg\,(\F(L'_{n-k}))=\reg\,(\F(L_{n-k}))$. Thus, from the formulas for the projection dimension and regularity above, we obtain
		$$
		\pd\,(\F(L_n))\leq \max\,\{\pd\,(P_{n})+1, \pd\,(\F(L_{n-2})), \pd\,(\F(L_{n-3}))\},   \hspace{2.5cm}(1)
		$$
		and
		$$
		\reg\,(\F(L_n))\leq1+\max\,\{\reg\,(P_{n}), \reg\,(\F(L_{n-2})), \reg\,(\F(L_{n-3}))\},   \hspace{2.2cm}(2)
		$$
		where equality holds if $\reg\,(P_{n})\neq\max\,\{ \reg\,(\F(L_{n-2})), \reg\,(\F(L_{n-3}))\}-1$.
		
		\medskip
		We will prove these assertions by induction on $n$.
		
		By direct calculation, one has  $ P_5=(x_4,x_3x_5)$, $P_6=(x_4x_6,x_3x_5)$, $\F(L_{2})=(x_1,x_2)$, $ \F(L_{3})=(x_1x_3,x_2)$  and $ \F(L_{4})=(x_1x_3,x_1x_4,x_2x_4)$.
		It follows that
		$$\pd\,(P_5)=\pd\,(P_6)=\pd\,(\F(L_{2}))=\pd\,(\F(L_{3}))=\pd\,(\F(L_{4}))=1,$$
		$$\reg\,(P_5)=2, \ \reg\,(P_6)=3,\  \reg\,(\F(L_{2}))=1,\ \reg\,(\F(L_{2}))=\reg\,(\F(L_{4}))=2.$$
		Hence, according to the formulas $(1)$ and $(2)$, we have
		$$
		\pd\,(\F(L_5))\leq 2,\ \ \pd\,(\F(L_6))\leq 2,\ \ \reg\,(\F(L_5))=3,\ \ \reg\,(\F(L_6))=4.
		$$
		This proves the assertion for $n=5$ and $n=6$.
		
		If $n=7$, then $P_7=(x_3x_5x_7,x_4x_6,x_4x_7)$. It follows that
		$(P_7:x_4)=(x_6,x_7)$ and $(P_7,x_4)=(x_3x_5x_7,x_4)$.
		One has  $\pd\,((P_7:x_4))=\pd\,((P_7,x_4))=1$, $\reg\,((P_7:x_4))=1$ and $\reg\,((P_7,x_4))=3$ by Lemma \ref{sum} (1) and (3). Applying  Lemma \ref{exact}  to the following  short exact sequence
		$$0\longrightarrow (P_7:x_4)(-1)\stackrel{ \cdot x_{4}} \longrightarrow P_7\longrightarrow (P_7,x_{4})\longrightarrow 0,$$
		we obtain $\pd\,(P_7)=1$ and $\reg\,(P_7)=3$.
		By inductive hypothesis,  we have
		$$\pd\,(\F(L_{4}))\leq 1, \ \pd\,(\F(L_{5}))\leq2, \ \reg\,(\F(L_{4}))=2, \ \reg\,(\F(L_{5}))=3.$$
		It follows that
		$\pd\,(\F(L_7))\leq 2$ and $\reg\,(\F(L_7))=4$ by formulas $(1)$ and $(2)$, as desired.
		
		Assume that $n\geq 8$. Set $M_n=x_4\F(L'_{n-5})$ and $N_n=x_3x_5\F(L'_{n-6})$.  One has
		$$\pd\,(M_n)=\pd\,(\F(L_{n-5}))\leq p-1,\ \  \pd\,(N_n)=\pd\,(\F(L_{n-6}))\leq p-2,\eqno(3)$$
		\begin{eqnarray*}
			\hspace{1cm}\reg\,(M_n)&=& 1+\reg\,(\F(L_{n-5}))=1+\left\{\begin{array}{ll}
				2p-3\ \ &\text{if}\ n=3p \text{\ or\ }n=3p+1\\
				2(p-1)\ \ &\text{if}\  n=3p+2
			\end{array}\right.\\
			&=&\left\{\begin{array}{ll}
				2p-3\ \ &\text{if}\ n=3p\\
				2p-2\ \ &\text{if}\ n=3p+1\\
				2p-1\ \ &\text{if}\  n=3p+2,
			\end{array}\right.    \hspace{5.8cm}(4)\\
			\reg\,(N_n)&=& 2+\reg\,(\F(L'_{n-6}))=2+\left\{\begin{array}{ll}
				2(p-2) &\text{if}\ n=3p \text{\ or\ } n=3p+1\\
				2(p-2)+1&\text{if}\  n=3p+2
			\end{array}\right.\\
			&=&\left\{\begin{array}{ll}
				2p-2\ \ &\text{if}\ n=3p \text{\ or\ }n=3p+1\\
				2p-1\ \ &\text{if}\  n=3p+2.
			\end{array}\right.    \hspace{4.1cm}(5)
		\end{eqnarray*}
		
		Since $\{x_{4}\}\cap \text{supp}\,(\F(L'_{n-6}))=\{x_{3},x_{5}\}\cap \text{supp}\,(\F(L'_{n-5}))=\emptyset$,
		one has
		$$P_n=M_n+N_n\ \ \text{and\ \ }M_n\cap N_n=x_3x_4x_5(\F(L'_{n-5})\cap \F(L'_{n-6})).
		$$
		Combining  Lemma \ref{supp} (2) and (3), the expressions of $M_n\cap N_n$ and the inductive hypothesis, we obtain
		\begin{eqnarray*}
			\pd\,(M_n\cap N_n)&=&\pd\,(\F(L'_{n-5})\cap \F(L'_{n-6}))=\pd\,(\F(L'_{(n-3)-2})\cap \F(L'_{(n-3)-3}))\\
			&=&\pd\,(P_{n-3})\leq \left\{\begin{array}{ll}
				p-2\ \ &\text{if}\ n=3p \text{\ or\ } n=3p+1\\
				p-1\ \ &\text{if}\ \ n=3p+2   \hspace{4.0cm}(6)
			\end{array}\right.
		\end{eqnarray*}
		\begin{eqnarray*}
			\reg\,(M_n\cap N_n)&=&3+\reg\,(\F(L'_{n-5})\cap \F(L'_{n-6}))\\
			&=&3+\reg\,(\F(L'_{(n-3)-2})\cap \F(L'_{(n-3)-3}))\\
			&=&3+\reg\,(P_{n-3})=3+\left\{\begin{array}{ll}
				2(p-1)-1\ \ &\text{if}\ n=3p \text{\ or\ } n=3p+1\\
				2(p-1)\ \ &\text{if}\ n=3p+2
			\end{array}\right.\\
			&=&\left\{\begin{array}{ll}
				2p\ \ &\text{if}\ n=3p \text{\ or\ } n=3p+1\\
				2p+1\ \ &\text{if}\ n=3p+2 \hspace{6.0cm}(7)
			\end{array}\right.
		\end{eqnarray*}
		Applying  Lemma \ref{exact} (2), (4) and formulas $(3) \sim (7)$ to the following short exact sequence
		$$0\longrightarrow M_n\cap N_n \longrightarrow  M_n\oplus N_n\longrightarrow P_n\longrightarrow 0, \eqno(\dag\dag)$$
		we obtain
		\begin{eqnarray*}
			\pd\,(P_n)&\leq &\max\,\{\pd\,(M_n\cap N_n)+1, \pd\,(M_n), \pd\,(N_n)\}\\
			&\leq & \left\{\begin{array}{ll}
				p-1\ \ &\text{if}\ n=3p \text{\ or\ } n=3p+1\\
				p\ \ &\text{if}\ \ n=3p+2
			\end{array}\right. \hspace{5.2cm}(8)\\
			\reg\,(P_n)&=&\max\,\{\reg\,(M_n\cap N_n)-1, \reg\,(M_n), \reg\,(N_n)\} \\
			&=&\left\{\begin{array}{ll}
				2p-1\ \ &\text{if}\ n=3p \text{\ or\ } n=3p+1\\
				2p\ \ &\text{if}\ n=3p+2
			\end{array}\right. \hspace{5.0cm}(9)
		\end{eqnarray*}
		Combining  formulas  (1), (2), (7), (8)  and  inductive hypothesis, one has
		\[
		\pd\,(\F(L_n))\leq\left\{\begin{array}{ll}
			p\ \ &\text{if}\ n=3p \text{\ or\ } n=3p+1\\
			p+1\ \ &\text{if}\ n=3p+2,
		\end{array}\right.
		\]
		and
		\[
		\reg\,(\F(L_n))=\left\{\begin{array}{ll}
			2p\ \ &\text{if}\ \ n=3p, \text{\ or\ } n=3p+1\\
			2p+1\ \ &\text{if}\ \ n=3p+2.
		\end{array}\right.
		\]
		This finishes the proof.
	\end{proof}

	\begin{Lemma} \label{ht1}{\em (\cite[Corollary 3.33]{MV})} Let  $I\subset S$  be a squarefree monomial ideal, then
		$$\text{{\em bight}}\, (I)\leq \pd\,(I)+1.$$
	\end{Lemma}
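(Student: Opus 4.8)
The plan is to deduce the inequality from a localization argument combined with the Auslander--Buchsbaum formula; the squarefree hypothesis is not essential to the bound itself, although by Lemma \ref{decomposition} it guarantees that every minimal prime is generated by a subset of the variables, so that $\text{bight}\,(I)$ is exactly the largest cardinality of such a subset. First I would pass from $I$ to $S/I$: the short exact sequence $0\to I\to S\to S/I\to 0$ gives $\pd\,(S/I)=\pd\,(I)+1$, so it suffices to establish the equivalent inequality $\text{bight}\,(I)\leq \pd\,(S/I)$.

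Next I would fix a minimal prime $P$ over $I$, set $h=\height\,(P)$, and localize at $P$. Because $P$ is minimal over $I$, the only prime of $S$ lying between $I$ and $P$ is $P$ itself, so $(S/I)_P\cong S_P/I_P$ is a nonzero module whose support is the single closed point of $\Spec\,(S_P)$; in particular it is Artinian and hence $\depth_{S_P}\,((S/I)_P)=0$. Since $S$ is a polynomial ring, $S_P$ is a regular local ring of dimension $h$, and the Auslander--Buchsbaum formula gives $\pd_{S_P}\,((S/I)_P)=\dim\,(S_P)-\depth_{S_P}\,((S/I)_P)=h$.

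Finally I would invoke the standard fact that projective dimension does not increase under localization: localizing a minimal free resolution of $S/I$ at $P$ produces a free resolution of $(S/I)_P$ over $S_P$ of no greater length, whence $h=\pd_{S_P}\,((S/I)_P)\leq \pd_S\,(S/I)=\pd\,(I)+1$. As $P$ ranges over all minimal primes of $I$, taking the supremum of $h=\height\,(P)$ then yields $\text{bight}\,(I)\leq \pd\,(I)+1$, as desired. The one point requiring genuine care is the depth computation for the localized module: one must verify that $(S/I)_P$ is a nonzero module of Krull dimension zero, so that its depth vanishes and the Auslander--Buchsbaum formula forces the projective dimension over $S_P$ to equal $\height\,(P)$; once this is in place, both the regularity of $S_P$ and the localization inequality are routine.
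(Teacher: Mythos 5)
Your argument is correct and complete: for a minimal prime $P$ of $I$ the localization $(S/I)_P$ is a nonzero module supported only at the closed point of $\Spec(S_P)$, hence Artinian of depth $0$, so Auslander--Buchsbaum over the regular local ring $S_P$ gives $\pd_{S_P}((S/I)_P)=\height(P)$, and since localization of a free resolution cannot increase length this is bounded by $\pd(S/I)=\pd(I)+1$. The paper itself offers no proof at all --- the lemma is quoted verbatim from Morey--Villarreal \cite[Corollary 3.33]{MV} --- so there is nothing internal to compare against; but it is worth noting that the cited source derives the inequality by a genuinely different route, namely Terai's formula $\pd(S/I)=\reg(I^{\vee})$ for the Alexander dual together with the observation that the minimal generators of $I^{\vee}$ are the products $\prod_{x_i\in P}x_i$ over minimal primes $P$, so that $\text{bight}(I)$ is the top generating degree of $I^{\vee}$ and hence at most $\reg(I^{\vee})$. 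That argument is tied to squarefree monomial ideals, whereas your localization argument never uses the monomial or squarefree hypotheses and proves the bound for an arbitrary nonzero proper ideal of a polynomial ring; the Alexander-duality route, in exchange, sits inside a framework that also yields the companion statements about $\reg$ and $\pd$ used elsewhere in that survey. Either way your proof is a valid, self-contained replacement for the citation.
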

	
	By Corollary \ref{ht}, Theorem \ref{line2}  and the  lemma above, we have
	\begin{Corollary}\label{line3}
		Let $n$ be a positive integer, $L_n$  defined as in Lemma 3.1. Then
		$$
		\pd\,(\F(L_n))= \left\{\begin{array}{ll}
			p\ \ &\text{if}\ \ n=3p\text{\ \ or\ \ }n=3p+1\\
			p+1\ \ &\text{if}\ \ n=3p+2.
		\end{array}\right.
		$$
	\end{Corollary}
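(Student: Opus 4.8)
The plan is to combine the upper bound on $\pd\,(\F(L_n))$ established in Theorem~\ref{line2} with a matching lower bound coming from the height estimate in Corollary~\ref{ht} and the inequality in Lemma~\ref{ht1}. Theorem~\ref{line2} already gives
$$
\pd\,(\F(L_n))\leq\begin{cases} p & \text{if } n=3p \text{ or } n=3p+1,\\ p+1 & \text{if } n=3p+2,\end{cases}
$$
so it remains only to show the reverse inequality. First I would apply Lemma~\ref{ht1}, which asserts $\text{bight}\,(I)\leq \pd\,(I)+1$ for any squarefree monomial ideal $I$; since $\F(L_n)$ is squarefree, this yields $\pd\,(\F(L_n))\geq \text{bight}\,(\F(L_n))-1$.

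Next I would invoke the explicit computation of the big height from Corollary~\ref{ht}(b), namely
$$
\text{bight}\,(\F(L_n))=\begin{cases} p+1 & \text{if } n=3p \text{ or } n=3p+1,\\ p+2 & \text{if } n=3p+2.\end{cases}
$$
Substituting this into $\pd\,(\F(L_n))\geq \text{bight}\,(\F(L_n))-1$ gives $\pd\,(\F(L_n))\geq p$ when $n=3p$ or $n=3p+1$, and $\pd\,(\F(L_n))\geq p+1$ when $n=3p+2$. In each case the lower bound coincides exactly with the upper bound from Theorem~\ref{line2}, forcing equality and proving the corollary. I would also dispose of the small or degenerate cases (for instance very small $n$ where the formula $n=3p+d$ with the stated ranges of $p$ needs $p\geq 1$) by direct inspection, noting these are already covered by the base computations in the proof of Theorem~\ref{line2}.

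Since both the upper bound and the big-height value are quoted as already-proved results, there is essentially no remaining obstacle here: the corollary is a purely formal consequence of sandwiching $\pd$ between two equal quantities. The only point demanding mild care is bookkeeping the three residue classes of $n$ modulo $3$ in parallel and checking that the $-1$ shift in Lemma~\ref{ht1} lines the big height up precisely with the projective-dimension bound rather than leaving a one-unit gap. The genuine mathematical work—the upper bound via the short exact sequences $(\dag)$ and $(\dag\dag)$ and the height enumeration via the minimal vertex covers $\Omega_n$—has all been carried out earlier, so this final step is just the assembly.
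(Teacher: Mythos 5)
Your proposal is correct and follows exactly the route the paper takes: the upper bound from Theorem~\ref{line2} is matched against the lower bound $\pd\,(\F(L_n))\geq \text{bight}\,(\F(L_n))-1$ obtained by combining Lemma~\ref{ht1} with the big-height computation in Corollary~\ref{ht}(b). No further comment is needed.
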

	
	An immediate consequence of the above corollary is the following corollary
	\begin{Corollary}\label{line3}
		Let $n$ be a positive integer, $L_n$  defined as in Lemma 3.1. Then
		$$
		\depth\,(\F(L_n))= \left\{\begin{array}{ll}
			2p\ \ &\text{if}\ \ n=3p\\
			2p+1\ \ &\text{if}\ \ n=3p+1\text{\ \ or\ \ }n=3p+2.
		\end{array}\right.
		$$
	\end{Corollary}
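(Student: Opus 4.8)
The plan is to derive the depth formula directly from the projective dimension formula in the preceding corollary by invoking the Auslander--Buchsbaum formula. Since $\F(L_n)$ is a homogeneous ideal of the polynomial ring $S=k[x_1,\dots,x_n]$ in exactly $n$ variables, we have $\depth\,(S)=n$, and the Auslander--Buchsbaum equality applied to $\F(L_n)$ viewed as a finitely generated graded $S$-module gives
$$
\pd\,(\F(L_n))+\depth\,(\F(L_n))=\depth\,(S)=n.
$$
Hence $\depth\,(\F(L_n))=n-\pd\,(\F(L_n))$, and everything reduces to substituting the value of $\pd\,(\F(L_n))$ already computed.

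First I would recall from the previous corollary that $\pd\,(\F(L_n))=p$ when $n=3p$ or $n=3p+1$, and $\pd\,(\F(L_n))=p+1$ when $n=3p+2$. Then I would simply carry out the subtraction in each of the three residue classes. When $n=3p$, this yields $\depth\,(\F(L_n))=3p-p=2p$. When $n=3p+1$, it gives $\depth\,(\F(L_n))=(3p+1)-p=2p+1$. When $n=3p+2$, it gives $\depth\,(\F(L_n))=(3p+2)-(p+1)=2p+1$. Collecting the last two cases together reproduces the stated formula.

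There is no genuine obstacle here; the only points requiring care are bookkeeping ones. I would make sure to state explicitly that the ambient ring carries exactly $n$ variables, since $\height$ and $\depth$ of the ideal both depend on this count, and I would double-check that the $n=3p+1$ and $n=3p+2$ cases, which have different projective dimensions, nevertheless collapse to the same depth value $2p+1$ after subtraction. Once these arithmetic checks are in place the conclusion is immediate.
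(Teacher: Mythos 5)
Your proposal is correct and is exactly the paper's argument: the paper's proof consists of the single line that the result follows from the Auslander--Buchsbaum formula applied to $\F(L_n)$ as an $S$-module, combined with the projective dimension values from the preceding corollary. Your arithmetic in the three residue classes, including the collapse of the $n=3p+1$ and $n=3p+2$ cases to the common value $2p+1$, checks out.
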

	\begin{proof}It follows from Auslander-Buchsbaum formula (see  \cite[Theorem 1.3.3]{BH}).
	\end{proof}
	
	\medskip
	
	\medskip
	\hspace{-6mm} {\bf Acknowledgments}
	
	\vspace{3mm}
	\hspace{-6mm}  This research is supported by the National Natural Science Foundation of China (No.11271275) and  by foundation of the Priority Academic Program Development of Jiangsu Higher Education Institutions.

	\medskip

\end{document}